\documentclass[12pt]{amsart}
\usepackage[T1]{fontenc}
\usepackage[utf8]{inputenc}
\usepackage{amsmath, amsthm, amsfonts, amssymb}
\usepackage{mathtools, mathrsfs, dsfont, relsize}
\usepackage{bm, bbm}
\usepackage{cancel}
\usepackage[normalem]{ulem}
\usepackage{graphicx}
\usepackage{xcolor}
\usepackage{enumerate, enumitem}
\usepackage{geometry}
\usepackage{tikz}
\usetikzlibrary{arrows.meta}
\usepackage{url, comment, verbatim}
\usepackage{todonotes}
\usepackage[most]{tcolorbox}
\usepackage[framemethod=tikz]{mdframed}
\usepackage{setspace}
\usepackage[pagewise]{lineno}
\graphicspath{{Images/}}

\usepackage{hyperref}
\hypersetup{
    colorlinks=true,
    linkcolor=blue,
    citecolor=blue,
    urlcolor=magenta
}

\usepackage{cleveref}  

\mathtoolsset{showonlyrefs=false} 
\numberwithin{equation}{section}

\theoremstyle{plain}
\newtheorem{theorem}{Theorem}

\newtheorem{corollary}[theorem]{Corollary}

\newtheorem{definition}[theorem]{Definition}
\newtheorem{remark}[theorem]{Remark}

\newcommand{\R}{\mathbb{R}}

\newcommand{\N}{\mathbb{N}}

\renewcommand{\S}{\mathbb{S}}

\newcommand{\supp}{\mathrm{supp}\,}

\author[F. Terzioglu and L. Yan]{%
Fatma Terzioglu$^{1,*}$ and Lili Yan$^1$\\[3pt]
\NoCaseChange{%
{\normalfont\small$^1$Department of Mathematics, North Carolina State University, Raleigh, NC, USA\\
fterzioglu@ncsu.edu, lyan6@ncsu.edu}}}
\thanks{$^*$Corresponding author}

\title[Range of Weighted Divergent Beam and Cone Integral Transforms]{Range Characterization of the Weighted Divergent Beam and Cone Integral Transforms}

\subjclass[2020]{Primary 44A05; Secondary 53C65, 92C55.}

\keywords{Conical Radon transform, Compton transform, divergent beam transform, range characterization, data consistency conditions, cone-beam tomography, Compton camera imaging}

\begin{document}
\begin{abstract}
We establish range characterizations, or data consistency conditions, for an integral transform that maps a function to its weighted integrals over conical surfaces in $\mathbb{R}^n$. We consider two different geometries for the cone vertices, which lead to mathematically distinct range conditions. We use the term \emph{conical Radon transform} when the vertex set is a bounded convex subset of $\mathbb{R}^n$ including support of the input function. The second geometry is motivated by Compton camera imaging where the vertex set represents planar detector locations and is disjoint from the support of the input function representing the radiation density. We refer to the associated transform as the \emph{Compton transform}.

Our approach is based on a factorization into the $k$-weighted divergent beam transform and the spherical section transform. In the bounded convex vertex geometry, the range of the divergent beam component is described by a
higher-order transport boundary-value problem, as studied by Derevtsov, Volkov, and Schuster \cite{Derevtsov2021}. In the planar detector geometry, we derive range conditions for the $k$-weighted divergent beam transform that generalize the planar cone-beam consistency conditions of Clackdoyle and Desbat \cite{ClackdoyleDesbat2013}. Combining these results with the range characterization of the spherical section transform yields complete range descriptions for both the $k$-weighted conical Radon transform and the $k$-weighted Compton transform.
\end{abstract}

\maketitle

\section{Introduction}\label{s:introduction}
Range characterizations, or equivalently data consistency
conditions, play an important role in integral geometry and
tomography. They identify the necessary and sufficient conditions under which a
given function can occur as the data of an integral transform; see, for example,
\cite{Agranovsky, Aguilar, Natt_old, Novikov, ClackdoyleDesbat2013}. In imaging
applications, these conditions distinguish physically and geometrically
consistent data from arbitrary functions on the data space. Consequently, they
provide consistency tests for measured data, guide the completion of missing or
truncated measurements, and can be used to detect or correct errors arising from
noise, detector miscalibration, or geometric imperfections in the acquisition
system; see \cite{Agranovsky, KuchCBMS, Patch2002,Lesaint2017} and the references therein.

In this paper, we establish range characterizations for the weighted conical Radon transform, which maps a function to its weighted integrals over conical surfaces in $\mathbb{R}^n$. This transform arises in the measurement models of $\gamma$-ray imaging modality called Compton camera imaging \cite{Terzioglu2018}.

Let $n\ge 2$, and let $\mathbb{S}^{n-1}\subset \mathbb{R}^n$ denote the unit
sphere, with surface measure $|\mathbb{S}^{n-1}|$. Given an apex, or vertex, $a\in A\subset \mathbb{R}^n$, a
direction $\beta\in \mathbb{S}^{n-1}$, and a (half-)opening angle
$\psi\in(0,\pi)$, the associated conical surface is
\[
\mathfrak{S}_{a,\beta,\psi}
:=
\{x\in\mathbb{R}^n:\ (x-a)\cdot\beta = |x-a|\cos\psi\}.
\]

\begin{definition}\label{CRT}
Let $k\in\mathbb{N}_0 = \N \cup \{0\}$. The $k$-weighted conical Radon transform of
$f\in \mathcal{C}_c^\infty(\mathbb{R}^n)$ is defined by
\begin{equation}\label{eq:CRT}
\mathscr{C}^k f(a,\beta,\psi)
:=
\int_{\mathfrak{S}_{a,\beta,\psi}}
f(x)\,|x-a|^{k-n+2}\,dS(x),
\end{equation}
where $dS(x)$ denotes surface measure on $\mathfrak{S}_{a,\beta,\psi}$.
\end{definition}

When the support of $f$ is disjoint from the vertex set $A$, the transform
\eqref{eq:CRT} is often called the \emph{Compton transform}. In Compton camera
imaging, the unknown function $f$ represents a radiation density, and the measured data are modeled by weighted integrals over conical surfaces whose vertices lie on the scattering detector. Thus the vertices represent detector bins and are assumed to lie outside the support of $f$:
\[
A\cap \supp f=\emptyset .
\]
The associated inverse problem is to recover the radiation density $f$ from these
Compton data. For more information on Compton camera imaging, see, for example,
\cite{Maxim2016, Terzioglu2018, Kim2024}.

Following \cite{Terzioglu2018}, we reserve the term \emph{Compton transform} for
this detector-radiation geometry and use \emph{conical Radon transform} for the
more general case in which the vertex set may intersect the support of $f$.
Although this more general transform does not directly model Compton camera
measurements, it has proven useful in their analysis; see
\cite{Terzioglu2015, KuchTer, Terzioglu2018}.

In Compton imaging, nontrivial weights are often included in the integral to model physical
effects such as the inverse-square law for gamma-ray emission and uniform
attenuation \cite{Munoz, Maxim2016}. Weighted variants of \eqref{eq:CRT}, many
motivated by Compton imaging, have been studied in
\cite{Smith, Maxim2009, KuchTer17, Palamodov, Haltmeier} and the references
therein. Further analytical properties of the Compton and conical Radon transforms were investigated in \cite{Terzioglu2019, Palamodov, Zhang}.

The range problem for conical Radon transforms has been addressed only in a few
settings. Moon \cite{Moon2016} and Baines \cite{Baines2021} studied range
characterizations for certain conical transforms. In particular, Baines obtained
a range characterization for an $n$-dimensional Compton transform defined on
cones with fixed opening angle, vertices in $\mathbb{R}^{n-1}$, and central axes
pointing in the north-pole direction. Based on this work, Jeon \cite{Jeon2025}
proposed a range description for an attenuated conical Radon transform with fixed
central axis and fixed opening angle. In \cite{Terzioglu2019}, we showed that the transform \eqref{eq:CRT} satisfies a differential equation that is independent of the power $k$; see Theorem~17 in \cite{Terzioglu2019}.

The present paper presents the complete range characterization of both the
$k$-weighted conical Radon transform and the $k$-weighted Compton transform. Our
analysis is based on the fact that these transforms factor as a composition of
the $k$-weighted divergent beam transform and the spherical section transform;
see Definitions~\ref{DBT} and~\ref{SST}. This reduces the range problem to two
separate questions: one for the $k$-weighted divergent beam transform and one for
the spherical section transform. The range characterization of the spherical
section transform was given in \cite{Terzioglu2025}, see also the references therein.

In view of this factorization, the cone vertices correspond to the source set of the associated divergent beam transform. This terminology is inherited from
cone-beam tomography, where the origins of the rays correspond to x-ray source
locations and the divergent beam transform is used to recover the attenuation
map \cite{Natt_old,NattWubb}. The inversion of the unweighted divergent beam transform has been extensively studied in cone-beam
tomography; see, for example,
\cite{Finch1985,Hamaker,SmithConeBeam,Gelfand-Goncharov,Tuy,Grangeat,Katsevich2002,Katsevich2004}
and the references therein. Reconstruction, unique continuation, and stability
properties of the $k$-weighted divergent beam transform were studied in
\cite{KuchTer17,Jathar2025}. Throughout the paper, we use \emph{source set} when referring to the divergent beam transform and \emph{vertex set} when referring to the conical Radon and Compton transforms. The two cases of cone integrals considered in this paper differ precisely in the geometry of this set.

For the conical Radon transform, we allow the vertex set to intersect the support of the input function $f$. Specifically, we take $A\subset\mathbb{R}^n$ to be bounded and convex, and assume
\[
\supp f\subset \bar{A},
\]
see Fig.~\ref{f:CRT_convex}. In this geometry, the range of the $k$-weighted divergent beam transform can be
described by a boundary-value problem for a higher-order transport equation. This
description follows from the work of Derevtsov, Volkov, and Schuster
\cite{Derevtsov2021}, who studied generalized attenuated ray transforms of order
$k$; the transform considered here corresponds to the unattenuated case.
Together with the range conditions for the spherical section transform, this
gives a range characterization for the $k$-weighted conical Radon transform.

For the Compton transform, the vertex set represents detector locations and is thus
disjoint from the support of the radiation density. We consider the planar
detector geometry
\[
A=\mathbb{R}^{n-1},
\qquad
\supp f\subset \mathbb{R}^n_+
=\mathbb{R}^{n-1}\times(0,\infty),
\]
so that
\[
\supp f\cap A=\emptyset;
\]
see Fig.~\ref{f:Compton_planar}. In this geometry, we establish range conditions
for the $n$-dimensional $k$-weighted divergent beam transform that extend the
cone-beam range conditions of Clackdoyle and Desbat
\cite{ClackdoyleDesbat2013}. Related data consistency conditions for other
source geometries include spherical source sets \cite{Finch1983}, line source
sets \cite{Sidky}, and circular source sets \cite{Clackdoyle2016}. Our result
generalizes the planar 3-dimensional unweighted case in
\cite{ClackdoyleDesbat2013} to higher dimensions and to the $k$-weighted setting.
Combining these conditions with the range characterization of the spherical
section transform yields the range characterization of the $k$-weighted Compton
transform.

The paper is organized as follows. In Section~\ref{s:preliminaries}, we define
the $k$-weighted divergent beam and spherical section transforms, derive their
relation to the conical Radon transform, and recall range characterization of the spherical section transform. In Section~\ref{s:rangeCRT}, we describe the range of the $k$-weighted divergent beam transform and the conical Radon transform in the
bounded convex vertex geometry. In Section~\ref{s:rangeCompton}, we establish
range conditions for the $k$-weighted divergent beam transform and the Compton
transform in the planar detector geometry. We conclude in
Section~\ref{s:conclusions}.

\section{Preliminaries}\label{s:preliminaries}
Using Dirac's distribution on the cone surface, and letting $s = \cos \psi$, we can equivalently define the $k$-weighted conical Radon transform \eqref{eq:CRT} as
\begin{align}\label{eq:ConeDeltaExplicit}
C^k f(a,\beta,s) 
&:= \mathscr{C}^k f(a,\beta, \cos^{-1} s) \\
&=\sqrt{1-s^2} \int_{\mathbb{R}^n}
f(x)\,\delta\!\big((x-a)\cdot\beta-|x-a|s\big)|x-a|^{k-n+2}\,dx. \nonumber
\end{align}
The $k$-weighted conical Radon transform can be expressed as a composition of the $k$-weighted divergent beam transform and the spherical section transform which are defined as follows.

\begin{definition}\label{def:DBT}
The $k$-weighted divergent beam transform of a function $f:\R^n \to \R$ is defined by
\begin{align}\label{DBT}
R^kf(a,v) = R^k_af(v) : = \int\limits_0^\infty f(a+r v)r^k \,dr,
\end{align}
where $a \in A \subset \R^n$ is the source of the beam directed along $v \in \S^{n-1}$. 
\end{definition}

\begin{definition}\label{def:SST}
The spherical section transform of a function $g:\S^{n-1} \to \R$ is defined by
\begin{align}\label{SST}
Sg(\beta,s) =\sqrt{1-s^2} \int\limits_{\S^{n-1}} g(v) \delta(v \cdot\beta-s) d v,
\end{align}
where $\beta\in \mathbb{S}^{n-1},\,s \in [-1,1]$, and $\delta$ is the one-dimensional Dirac's distribution.
\end{definition} 

Changing to spherical coordinates, with $x-a=rv$ in \eqref{eq:ConeDeltaExplicit}, we obtain
\begin{align} \label{Cone=SSTofDBT}
C^k f(a,\beta,s) = S(R^k_af)(\beta,s).
\end{align}

Therefore, the range of the conical Radon transform can be described by combining the range conditions for the $k$-weighted divergent beam transform and the spherical section transform. We establish a range characterization for the former in later sections. The necessary and sufficient conditions for a function to belong to the range of the spherical section transform are already known, and we recall them here.

\begin{theorem}[Range characterization of the spherical section transform] \label{t:rangeSST}
Let $n\ge 3$. A function $g \in \mathcal{C}^\infty(\mathbb{S}^{n-1} \times (-1,1))$ belongs to the range of the spherical section transform, that is $g = Sh$, for some function $h \in \mathcal{C}^\infty(\mathbb{S}^{n-1})$, if and only if \(g\) satisfies the following conditions:
\begin{enumerate}
\item \textbf{Evenness:} For any $(\beta,s) \in \mathbb{S}^{n-1} \times (-1,1)$,
$$g(-\beta,-s) =  g(\beta,s).$$
\item \textbf{PDE Condition:} For any $(\beta,s) \in \mathbb{S}^{n-1} \times (-1,1)$,
$$\Big[(1-s^2) \partial^2_s +(n-3)s \partial_s + \frac{n-2}{1-s^2} - \Delta_{\mathbb{S}^{n-1}}\Big] g(\beta,s) = 0,$$
where $\Delta_{\mathbb{S}^{n-1}}$ denotes the spherical Laplacian acting in the variable $\beta$.
\item \textbf{Limit Condition:} The normalized endpoint limits
\[
\lim_{s\to \pm 1}
\frac{g(\beta,s)}
{|\mathbb S^{n-2}|(1-s^2)^{(n-2)/2}},
\]
exist for every $\beta\in\mathbb S^{n-1}$ and define the smooth functions
$h(\pm\beta)$.
\end{enumerate}
\end{theorem}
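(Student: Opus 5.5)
We prove necessity and sufficiency separately. Throughout, we expand in spherical harmonics and reduce everything to ordinary differential equations in $s$.

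\textbf{Necessity.} Let $g=Sh$ with $h\in\mathcal{C}^\infty(\mathbb{S}^{n-1})$.

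Evenness is immediate from \eqref{SST}, since $\delta(v\cdot(-\beta)+s)=\delta(v\cdot\beta-s)$.

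For the PDE condition, we use the Funk--Hecke formula. For $h=Y_\ell$ a spherical harmonic of degree $\ell$,
\[
\int_{\mathbb{S}^{n-1}} Y_\ell(v)\,\delta(v\cdot\beta-s)\,dv=|\mathbb{S}^{n-2}|\,(1-s^2)^{(n-3)/2}\,P_\ell^{(n)}(s)\,Y_\ell(\beta),
\]
where $P_\ell^{(n)}$ is the normalized Gegenbauer (Legendre) polynomial in dimension $n$. Hence
\[
S Y_\ell(\beta,s)=|\mathbb{S}^{n-2}|\,(1-s^2)^{(n-2)/2}\,P_\ell^{(n)}(s)\,Y_\ell(\beta).
\]
Set $u(s)=(1-s^2)^{(n-2)/2}P_\ell^{(n)}(s)$. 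We substitute into the Gegenbauer equation
\[
(1-s^2)P''-(n-1)sP'+\ell(\ell+n-2)P=0
\]
and check that $u$ satisfies
\[
(1-s^2)u''+(n-3)su'+\frac{n-2}{1-s^2}\,u+\ell(\ell+n-2)\,u=0 .
\]
This is a routine conjugation computation, and it is exactly where the coefficients $n-3$ and $\frac{n-2}{1-s^2}$ come from. Since $-\Delta_{\mathbb{S}^{n-1}}Y_\ell=\ell(\ell+n-2)Y_\ell$, the PDE holds for each harmonic. It extends to general $h$ because the harmonic expansion of a smooth $h$ converges rapidly, so termwise differentiation is justified.

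For the limit condition, $P^{(n)}_\ell(\pm1)=(\pm1)^\ell$. On each harmonic the normalized limit is therefore $(\pm1)^\ell Y_\ell(\beta)=Y_\ell(\pm\beta)$. To pass to the limit in the full series we need a bound that is uniform in $s$; it follows from $|P_\ell^{(n)}|\le1$ together with the rapid decay of the harmonic coefficients of $h$. The limits thus equal $h(\pm\beta)$.

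\textbf{Sufficiency.} Let $g$ satisfy the three conditions and expand $g(\beta,s)=\sum_{\ell,m} g_{\ell m}(s)Y_{\ell m}(\beta)$. The PDE condition forces each coefficient $g_{\ell m}$ to solve the ODE above.

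We next identify the two-dimensional solution space of that ODE. Writing $u=(1-s^2)^{(n-2)/2}w$, the function $w$ solves the Gegenbauer equation. Its solutions are spanned by $P_\ell^{(n)}$ and a second solution $Q_\ell$, which is singular at $s=\pm1$: it behaves like $(1-s^2)^{-(n-3)/2}$, or like a logarithm when $n=3$. Now apply the limit condition: $g_{\ell m}(s)/(1-s^2)^{(n-2)/2}$ must have finite limits at both endpoints. This kills the $Q_\ell$ component, so
\[
g_{\ell m}=c_{\ell m}\,|\mathbb{S}^{n-2}|\,(1-s^2)^{(n-2)/2}P_\ell^{(n)}.
\]
Define $h:=\sum c_{\ell m}Y_{\ell m}$. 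By the limit condition the coefficients $c_{\ell m}$ are precisely the harmonic coefficients of the smooth function $h(\beta)=\lim_{s\to1}g(\beta,s)/(|\mathbb{S}^{n-2}|(1-s^2)^{(n-2)/2})$. Hence $h\in\mathcal{C}^\infty$, and $Sh=g$ coefficientwise and therefore as functions.

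Evenness is consistent with this construction: $P_\ell^{(n)}(-s)=(-1)^\ell P_\ell^{(n)}(s)$ and $Y_\ell(-\beta)=(-1)^\ell Y_\ell(\beta)$, so each term is automatically even. The limit at $s=-1$ then agrees with $h(-\beta)$, and evenness imposes no further constraint on the $c_{\ell m}$.

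\textbf{Main obstacle.} The delicate step is the endpoint analysis in the sufficiency part. We must show that the second solution genuinely violates the limit condition for every $n\ge3$, including the logarithmic case $n=3$. For this we plan to use Frobenius exponents at $s=\pm1$: they are $0$ and $-(n-3)/2$ for $w$, so the second solution has exponent $-(n-3)/2$ when $n\ge4$ and a logarithmic term when $n=3$. We must also justify that the pointwise limit condition applies to each coefficient $g_{\ell m}$, which requires interchanging the limit with integration against $Y_{\ell m}$. To do this we plan to use the assumption that the limits exist for every $\beta$ and define smooth functions, upgraded to $L^2$ convergence via dominated convergence, which needs a local uniform bound near the endpoints. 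If that bound is not directly available, we would instead work with the normalized function $g/(1-s^2)^{(n-2)/2}$ directly and show that it is a solution of a regular-singular equation that extends continuously to $s=\pm1$.
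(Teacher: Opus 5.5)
Your necessity argument is sound. The Funk--Hecke computation checks out, as does the conjugation of the Gegenbauer equation into the stated ODE (using $-\Delta_{\mathbb{S}^{n-1}}Y_\ell=\ell(\ell+n-2)Y_\ell$). The termwise passages are also justified, by $|P^{(n)}_\ell|\le 1$, polynomial bounds on derivatives, and rapid decay of the coefficients of $h$. The paper itself only defers to an appendix of another work, and this harmonic-analysis route is the natural one.

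The gap is in sufficiency, and it is the one you flag without closing. You twice transfer the limit condition, which is only a pointwise statement in $\beta$, to the coefficients $g_{\ell m}(s)=\int_{\mathbb{S}^{n-1}}g(\beta,s)Y_{\ell m}(\beta)\,d\beta$: once to discard the second-kind solution, and once to identify $c_{\ell m}$ with the harmonic coefficients of $h$. The hypotheses give no bound on $g(\beta,s)/(1-s^2)^{(n-2)/2}$ that is uniform in $\beta$ as $s\to\pm1$, so dominated convergence is unavailable. Your fallback (showing the normalized function extends continuously to $s=\pm1$) just restates what must be proved. Making your endpoint route rigorous would require asymptotics of the singular solutions near $s=1$ that are uniform in $\ell$, which you have not supplied.

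The missing idea is that evenness, which you treat as a mere consistency check, removes the second solution with no endpoint analysis.
\begin{itemize}
\item From $g(-\beta,-s)=g(\beta,s)$ and $Y_{\ell m}(-\beta)=(-1)^\ell Y_{\ell m}(\beta)$ you get $g_{\ell m}(-s)=(-1)^\ell g_{\ell m}(s)$.
\item The Gegenbauer equation is invariant under $s\mapsto -s$. So its solution space on $(-1,1)$ splits into a one-dimensional even part and a one-dimensional odd part, each fixed by Cauchy data at the regular point $s=0$.
\item $P^{(n)}_\ell$ spans the part of parity $(-1)^\ell$; the complementary solution, of parity $(-1)^{\ell+1}$, is the singular one.
\item Hence $g_{\ell m}=c_{\ell m}|\mathbb{S}^{n-2}|(1-s^2)^{(n-2)/2}P^{(n)}_\ell$ directly.
\end{itemize}

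To identify $h$, get decay from interior smoothness rather than from the limit.
\begin{itemize}
\item Since $g(\cdot,0)$ and $\partial_s g(\cdot,0)$ are smooth on the sphere, $g_{\ell m}(0)=|\mathbb{S}^{n-2}|c_{\ell m}P^{(n)}_\ell(0)$ and $g_{\ell m}'(0)=|\mathbb{S}^{n-2}|c_{\ell m}(P^{(n)}_\ell)'(0)$ decay faster than any power of $\ell$.
\item The explicit Gegenbauer values show that $|P^{(n)}_\ell(0)|$ for even $\ell$, and $|(P^{(n)}_\ell)'(0)|$ for odd $\ell$, are bounded below by a negative power of $\ell$.
\item Thus $c_{\ell m}$ decays rapidly, and $\sum c_{\ell m}P^{(n)}_\ell(s)Y_{\ell m}(\beta)$ converges uniformly on $\mathbb{S}^{n-1}\times[-1,1]$.
\item The limit $s\to 1$ can then be taken termwise. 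The limit condition identifies $h$ with the smooth function $\sum c_{\ell m}Y_{\ell m}$, and $Sh=g$ coefficientwise, hence identically.
\end{itemize}
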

\begin{proof}
The proof is a straightforward modification of the argument given in \cite[Appendix]{Terzioglu2025}.
\end{proof}

\section{\texorpdfstring{\boldmath Range characterization of the $k$-weighted conical Radon transform}{}}\label{s:rangeCRT}
Throughout this section, we consider the set of cone vertices $A\subset\mathbb{R}^n$ to be a bounded and convex subset of $\R^n$ and assume that $f\in \mathcal{C}_c^\infty(\mathbb{R}^n)$ with $\supp f\subset A$ (see fig. \ref{f:CRT_convex}). We first describe the range of the $k$-weighted divergent beam transform $R^k$. Then, using the identity \eqref{Cone=SSTofDBT}, we obtain complete range characterization of the conical Radon transform by combining the range conditions for $R^k$ with those for the spherical section transform $S$.

\begin{figure}[htbp]
\centering
\includegraphics[width=0.4\textwidth]{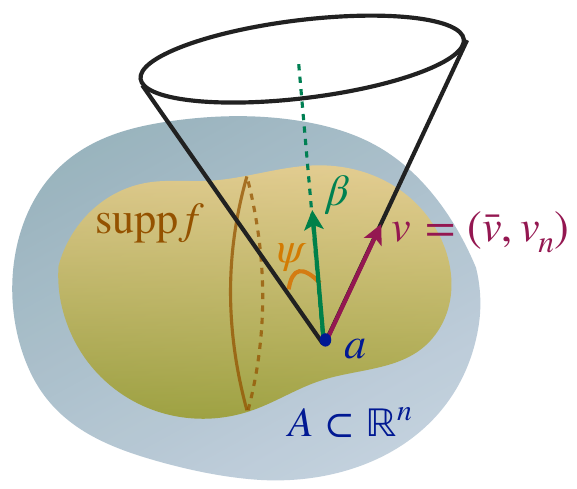}
\caption{Vertex set geometry for the conical Radon transform.}
\label{f:CRT_convex}
\end{figure}

\subsection{\texorpdfstring{\boldmath Range of the $k$-weighted divergent beam transform with source set containing $\supp f$}{}}
Suppose that $A\subset\mathbb{R}^n$ is a bounded open convex set with smooth boundary
$\partial A$. Let $n(a)$ denote the outward unit normal on $\partial A$. For
$v\in\mathbb{S}^{n-1}$, we set
\[
D_v:=v\cdot\nabla_a .
\]
We define the outflow boundary by
\[
\Gamma_+
:=
\{(a,v)\in \partial A\times \mathbb{S}^{n-1}:\ v\cdot n(a)>0\},
\]
and the forward exit time by
\[
\tau_+(a,v):=\inf\{t>0:\ a+t v\notin A\}, \quad (a,v)\in \bar{A}\times \mathbb{S}^{n-1}.
\]

For $k\in\mathbb{N}_0$ and $f\in \mathcal{C}_c^\infty(\mathbb{R}^n)$ with
$\supp f\subset A$, the integrand vanishes for $t\ge \tau_+(a,v)$, so
$R^k f(a,v)$ may equivalently be written as
\[
R^k f(a,v)
:=
\int_0^{\tau_+(a,v)} f(a+t v)\,t^k\,dt,
\qquad
(a,v)\in \bar{A}\times \mathbb{S}^{n-1}.
\]

For $k=0$, $R^0$ reduces to the usual ray transform \cite{Natt_old,NattWubb}, which is characterized as the unique  solution of a first-order transport equation with outflow boundary data \cite{Bal2009}. The weighted case
$k\ge 1$ can be viewed as an iterated antiderivative with respect to the
transport operator $D_v$, leading to a generalized transport equation of order $k+1$.

Following this approach, Derevtsov, Volkov, and Schuster \cite{Derevtsov2021}
studied attenuated ray transforms of order $k$. In particular, they showed that
the order-$k$ transform satisfies a transport equation of order $k+1$
\cite[Theorem~4]{Derevtsov2021}, and established uniqueness for the
corresponding higher-order attenuated boundary-value problem
\cite[Theorem~7]{Derevtsov2021}. The transform considered here is the
unattenuated analogue. In this case, both the higher-order equation and the
uniqueness statement follow from a direct characteristic argument, which we give
below for completeness.

\begin{theorem}[Range characterization of the $k$-weighted divergent beam transform]
\label{t:rangeRayBall}
Let $n\ge 2$ and $k\in \mathbb{N}_0$. Suppose that $A\subset\mathbb{R}^n$ is a bounded open convex set with smooth boundary and $f\in \mathcal{C}_c^\infty(\mathbb{R}^n)$ with
$\supp f\subset A$. For
$u\in \mathcal{C}^{k+1}(\bar{A}\times \mathbb{S}^{n-1})$, the following are equivalent:
\begin{equation}\label{eq:u=Rf}
u(a,v)=R^k f(a,v)
\qquad\text{for } (a,v)\in \bar{A}\times \mathbb{S}^{n-1},
\end{equation}
and
\begin{equation}\label{eq:BVP}
\begin{cases}
(D_v)^{k+1}u(a,v)=(-1)^{k+1}k!\,f(a),
& a\in A,\\[4pt]
(D_v)^j u(a,v)=0,
& (a,v)\in \Gamma_+,\quad j=0,1,\dots,k.
\end{cases}
\end{equation}
In particular, for each fixed $v\in\mathbb{S}^{n-1}$, the boundary-value problem
\eqref{eq:BVP} has at most one solution, and its unique solution is given by \eqref{eq:u=Rf}.
\end{theorem}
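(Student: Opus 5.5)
The plan is to reduce everything to one-dimensional calculus along each line $t\mapsto a+tv$. Fix $v\in\mathbb{S}^{n-1}$ and $a\in\bar A$. Set $\varphi(s):=u(a+sv,v)$ for $s\in[-\tau_-(a,v),\tau_+(a,v)]$, where $\tau_-$ is the backward exit time. Then $(D_v)^j u(a+sv,v)=\varphi^{(j)}(s)$. Convexity of $A$ guarantees that the chord $\{a+sv\}$ inside $\bar A$ is a single segment. Moreover, its forward endpoint $b=a+\tau_+ v$ satisfies $v\cdot n(b)\ge 0$, with strict inequality except at tangential (grazing) points.

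\textbf{Necessity} ($\eqref{eq:u=Rf}\Rightarrow\eqref{eq:BVP}$). Write
\[
\varphi(s)=R^kf(a+sv,v)=\int_0^\infty f(a+(s+t)v)\,t^k\,dt=\int_s^\infty f(a+\sigma v)(\sigma-s)^k\,d\sigma .
\]
Differentiate under the integral $j\le k$ times. Each derivative acts only on $(\sigma-s)^k$, because the boundary term at $\sigma=s$ vanishes whenever the current power is positive. This gives
\[
\varphi^{(j)}(s)=(-1)^j\frac{k!}{(k-j)!}\int_s^\infty f(a+\sigma v)(\sigma-s)^{k-j}\,d\sigma .
\]
Differentiating once more produces the boundary term $(-1)^{k+1}k!\,f(a+sv)$. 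Evaluating at $s=0$ gives the transport equation at every $a\in A$. If $(a,v)\in\Gamma_+$, the ray $a+\sigma v$ with $\sigma>0$ lies outside $A$: it leaves $A$ immediately, and by convexity it never re-enters. Since $\supp f\subset A$, all the integrals vanish, which gives the boundary conditions.

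\textbf{Sufficiency and uniqueness.} Let $u\in\mathcal{C}^{k+1}$ satisfy \eqref{eq:BVP}, and set $w:=u-R^kf$. By linearity and the necessity part, $w$ satisfies the homogeneous problem. Then $\psi(s)=w(a+sv,v)$ satisfies $\psi^{(k+1)}\equiv0$ on the open chord, so $\psi$ is a polynomial of degree $\le k$. By continuity of the derivatives up to $\bar A$, this extends to the closed chord. At the forward endpoint $b$ with $(b,v)\in\Gamma_+$ we have $\psi^{(j)}(\tau_+)=0$ for $j=0,\dots,k$. A polynomial of degree $\le k$ with all these derivatives vanishing at one point is identically zero, so $w(a,v)=\psi(0)=0$, i.e.\ $u=R^kf$. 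Uniqueness for each fixed $v$ is exactly this argument.

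The main obstacle is the set of grazing chords, where the forward endpoint has $v\cdot n(b)=0$ and so does not lie in $\Gamma_+$. These are points $a\in\partial A$ whose chord in direction $v$ is degenerate, or chords lying in tangent directions. I would handle them by density. For fixed $v$, the points $a\in\bar A$ whose forward exit is transversal form a dense subset of $\bar A$: the grazing set is the shadow boundary, which has empty interior. Both $w$ and $R^kf$ are continuous on $\bar A\times\mathbb{S}^{n-1}$, and $w$ vanishes on this dense set, so $w$ vanishes everywhere. Continuity of $R^kf$ up to $\bar A$ holds because $f$ has compact support inside the open set $A$.
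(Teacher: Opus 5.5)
Your proposal is correct and follows essentially the same route as the paper. Necessity comes from differentiating $R^kf$ along the characteristic; your explicit formula $\int_s^\infty f(a+\sigma v)(\sigma-s)^k\,d\sigma$ is equivalent to the paper's recursion $D_v u_k=-k\,u_{k-1}$. Sufficiency comes from showing that $w=u-R^kf$, restricted to a chord, is a polynomial of degree at most $k$ that the $k+1$ outflow conditions force to vanish.

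Your density step for grazing chords is valid but simpler than you make it. For $a\in A$, the open convex set lies strictly on one side of the tangent plane at the exit point $b$, so $v\cdot n(b)>0$ automatically. The paper uses exactly this fact to get $w=0$ on $A\times\mathbb{S}^{n-1}$, and passing to $\bar A$ is then just continuity.
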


\begin{proof}
Let
\[
u_k(a,v):=R^k f(a,v)u_k(a,v)=\int_0^\infty f(a+t v)t^k\,dt.
\]
We first verify that $u_k$ satisfies \eqref{eq:BVP}. Since
\[
D_v f(a+tv)=\frac{d}{dt}f(a+tv),
\]
for $k=0$, differentiation under the integral sign gives
\[
D_v u_0(a,v)
=
\int_0^\infty \frac{d}{dt}f(a+t v)\,dt
=
-f(a),
\]
where the term at infinity vanishes because $f$ is compactly supported. For
$k\ge 1$, integration by parts gives
\[
D_v u_k(a,v)
=
\int_0^\infty \frac{d}{dt}f(a+t v)t^k\,dt
=
-k\int_0^\infty f(a+t v)t^{k-1}\,dt
=
-k u_{k-1}(a,v).
\]
Thus
\[
D_v u_k=-k u_{k-1},\qquad k\ge 1.
\]
Iterating this identity yields
\[
(D_v)^k u_k=(-1)^k k! u_0.
\]
Applying $D_v$ once more and using $D_vu_0=-f$ gives
\[
(D_v)^{k+1}u_k
=
(-1)^k k! D_vu_0
=
(-1)^{k+1}k! f.
\]

It remains to verify the boundary conditions. If $(a,v)\in\Gamma_+$, then
$\tau_+(a,v)=0$, and hence $u_k(a,v)=0$. Moreover, by convexity of $A$, the
forward ray starting at such a boundary point remains outside $A$. Therefore
\[
u_k(a+r v,v)=0,\qquad r\ge 0,
\]
and differentiating along the characteristic gives
\[
(D_v)^j u_k(a,v)=0,
\qquad
(a,v)\in\Gamma_+,\quad j=0,1,\dots,k.
\]
Hence $u_k=R^k f$ satisfies \eqref{eq:BVP}.

Conversely, suppose that $u$ satisfies \eqref{eq:BVP}. Since $u_k=R^k f$
also satisfies \eqref{eq:BVP}, the difference
\[
w:=u-u_k
\]
satisfies the homogeneous problem
\[
(D_v)^{k+1}w=0 \quad \text{in } A,
\qquad
(D_v)^j w=0 \quad \text{on } \Gamma_+,\quad j=0,1,\dots,k.
\]
We now prove that $w\equiv 0$. Fix $a\in A$ and
$v\in\mathbb{S}^{n-1}$, and define
\[
\varphi(t):=w(a+t v,v),
\qquad
0\le t\le \tau_+(a,v).
\]
Then
\[
\varphi^{(j)}(t)=(D_v)^j w(a+t v,v),
\qquad j=0,1,\dots,k+1.
\]
Since $(D_v)^{k+1}w=0$, the function $\varphi$ is a polynomial of degree at
most $k$. Let
\[
a_+:=a+\tau_+(a,v)v\in\partial A
\]
be the forward exit point. Since $A$ is convex, $(a_+,v)\in\Gamma_+$. Therefore
the boundary conditions imply
\[
\varphi^{(j)}(\tau_+(a,v))
=
(D_v)^j w(a_+,v)
=
0,
\qquad j=0,1,\dots,k.
\]
A polynomial of degree at most $k$ whose derivatives of orders $0,\dots,k$
vanish at one point is identically zero. Hence $\varphi\equiv 0$, and in
particular $w(a,v)=0$. Since $a$ and $v$ were arbitrary, $w\equiv 0$ in
$A\times\mathbb{S}^{n-1}$. Therefore
\[
u(a,v)=R^k f(a,v),
\]
as claimed.

Finally, the uniqueness statement follows by applying the same argument to
the difference of two solutions.
\end{proof}

\subsection{\texorpdfstring{\boldmath Range of the conical Radon transform with cone vertices containing $\supp f$}{}}
By combining the range characterizations of the spherical section transform and the $k$-weighted divergent beam transform, we arrive at the range characterization for the conical Radon transform.

\begin{theorem}[Range characterization of the weighted conical Radon transform]
\label{t:rangeCRT}
Let \(n\ge 3\) and \(k\in \mathbb N_0\). Suppose that
$A\subset\mathbb{R}^n$ is a bounded open convex set with smooth boundary, and
\[
g \in \mathcal{C}^\infty(\bar{A}\times \mathbb S^{n-1}\times (-1,1)).
\]
Then $g$ belongs to the range of the $k$-weighted conical Radon transform, that is,
$g=C^k f$ for some $f\in \mathcal{C}_c^\infty(A)$, if and only if $g$ satisfies the following conditions:
\begin{enumerate}
\item[(i)] \textbf{Evenness:} For every \(a\in \bar{A}\),
\[
g(a,-\beta,-s)=g(a,\beta,s),
\qquad
(\beta,s)\in \mathbb S^{n-1}\times (-1,1).
\]

\item[(ii)] \textbf{PDE Condition:} For every \(a\in \bar{A}\),
\[
\Big[(1-s^2)\partial_s^2+(n-3)s\partial_s+\frac{n-2}{1-s^2}-\Delta_{\mathbb S^{n-1}}\Big]
g(a,\beta,s)=0,
\qquad
(\beta,s)\in \mathbb S^{n-1}\times (-1,1),
\]
where $\Delta_{\mathbb S^{n-1}}$ denotes the spherical Laplacian.

\item[(iii)] \textbf{Limit Condition:} The limit
\[
u(a,\beta):=\lim_{s\to 1}
\frac{g(a,\beta,s)}{|\mathbb S^{n-2}|(1-s^2)^{(n-2)/2}},
\]
exists for every \((a,\beta)\in \bar{A}\times \mathbb S^{n-1}\), and defines a smooth function
\[
u\in \mathcal{C}^\infty(\bar{A}\times \mathbb S^{n-1}).
\]
Furthermore, $u$ satisfies the following conditions:
\begin{enumerate}
\item \textbf{Outflow boundary conditions:}
\[
(D_v)^j u(a,v)=0,
\qquad
(a,v)\in \Gamma_+,\quad j=0,1,\dots,k.
\]

\item \textbf{Directional Independence:} The function $(D_v)^{k+1}u(a,v)$ is independent of \(v\in \mathbb S^{n-1}\) for each \(a\in A\).

\item \textbf{Compact support in the source set:} There exists a compact set
\(K\Subset A\) such that
\[
(D_v)^{k+1}u(a,v)=0,
\qquad
a\in A\setminus K,\quad v\in \mathbb S^{n-1}.
\]
\end{enumerate}
\end{enumerate}

In particular, the function
\[
f(a):=\frac{(-1)^{k+1}}{k!}(D_v)^{k+1}u(a,v),
\]
satisfies
\[
u(a,v)=R_a^k f(v),
\qquad\text{and}\qquad
g(a,\beta,s)=S(R_a^k f)(\beta,s)=C^k f(a,\beta,s).
\]
Moreover, \(f\) is uniquely determined by \(g\).
\end{theorem}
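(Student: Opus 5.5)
The proof combines the factorization \eqref{Cone=SSTofDBT}, $C^kf(a,\cdot,\cdot)=S(R^k_af)$, with Theorem~\ref{t:rangeSST} applied for each fixed $a\in\bar A$, and with Theorem~\ref{t:rangeRayBall} applied to the function of $(a,v)$ obtained from the limit condition.

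\emph{Necessity.} Let $g=C^kf$ with $f\in\mathcal C_c^\infty(A)$, and fix $a\in\bar A$. The function $h_a:=R^k_af$ is smooth on $\mathbb S^{n-1}$ and $g(a,\cdot,\cdot)=Sh_a$. Theorem~\ref{t:rangeSST} then gives evenness (i), the PDE (ii), and the limit formula with limit $h_a(\beta)$ as $s\to1$. Hence $u(a,\beta)=R^kf(a,\beta)$, and $u$ is smooth on $\bar A\times\mathbb S^{n-1}$ by differentiation under the integral sign. By Theorem~\ref{t:rangeRayBall}, $u$ satisfies the outflow conditions (a) and $(D_v)^{k+1}u=(-1)^{k+1}k!\,f(a)$. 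This quantity is independent of $v$, which gives (b), and it vanishes outside $K=\supp f\Subset A$, which gives (c).

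\emph{Sufficiency.} Assume (i)--(iii). By (b), the function $f(a):=\frac{(-1)^{k+1}}{k!}(D_v)^{k+1}u(a,v)$ is well defined on $A$. It is smooth because $u$ is, and by (c) it extends by zero to an element of $\mathcal C_c^\infty(A)$. Then $u$ satisfies the boundary-value problem \eqref{eq:BVP} with this $f$, and the uniqueness part of Theorem~\ref{t:rangeRayBall} gives $u=R^kf$ on $A\times\mathbb S^{n-1}$; continuity extends this to $\bar A$. Now fix $a$. By (i), (ii) and (iii), $g(a,\cdot,\cdot)$ satisfies the hypotheses of Theorem~\ref{t:rangeSST}, so $g(a,\cdot,\cdot)=Sh$ for some smooth $h$, with $h(\beta)$ equal to the normalized limit at $s\to1$, which is $u(a,\beta)$. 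Thus $h=R^k_af$, and \eqref{Cone=SSTofDBT} gives $g=C^kf$.

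The one point needing care is that condition (iii) of the theorem only states the limit at $s\to1$, while Theorem~\ref{t:rangeSST} also asks for the limit at $s\to-1$. The evenness (i) settles this: the limit at $s\to-1$ for $\beta$ equals the limit at $s\to1$ for $-\beta$, which is $u(a,-\beta)$. So both endpoint limits exist and are smooth, and together they define the single function $h=u(a,\cdot)$ on the whole sphere, consistent with the roles $h(\pm\beta)$ in Theorem~\ref{t:rangeSST}.

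\emph{Uniqueness of $f$.} If $C^kf_1=C^kf_2$, then the limit condition recovers $R^kf_1=R^kf_2$ pointwise. Applying $(D_v)^{k+1}$ gives $f_1=f_2$.
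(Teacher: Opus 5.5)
Your proposal is correct and follows the paper's proof essentially step for step. For necessity you apply Theorem~\ref{t:rangeSST} for each fixed $a$ and Theorem~\ref{t:rangeRayBall} to $u=R^kf$; for sufficiency you define $f$ from $(D_v)^{k+1}u$, invoke uniqueness for the boundary-value problem \eqref{eq:BVP}, and use evenness to supply the $s\to-1$ limit before applying Theorem~\ref{t:rangeSST}. Your explicit remarks on the smoothness of $u=R^kf$ and on extending $u=R^kf$ from $A$ to $\bar A$ by continuity are small details that the paper leaves implicit.
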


\begin{proof}
We first prove necessity. Suppose that there exists \(f\in \mathcal{C}_c^\infty(A)\) such that
\[
g(a,\beta,s)=C^k f(a,\beta,s)=S(R_a^k f)(\beta,s).
\]
Define
\[
u(a,v):=R_a^k f(v).
\]
Then, for each fixed \(a\in \bar{A}\), the function \((\beta,s)\mapsto g(a,\beta,s)\)
is the spherical section transform of \(u(a,\cdot)\). Hence,
conditions \emph{(i)} and \emph{(ii)} immediately follow from \Cref{t:rangeSST}. Also,
\[
\lim_{s\to 1}
\frac{g(a,\beta,s)}{|\mathbb S^{n-2}|(1-s^2)^{(n-2)/2}} = u(a,\beta).
\]

Since \(u(a,v)=R_a^k f(v)\), \Cref{t:rangeRayBall} guarantees that the boundary conditions hold:
\[
(D_v)^j u(a,v)=0,
\qquad
(a,v)\in \Gamma_+,\quad j=0,1,\dots,k,
\]
and the function $u$ satisfies
\[
(D_v)^{k+1}u(a,v)=(-1)^{k+1}k!\,f(a),
\qquad
a\in A,
\]
the right-hand side of which is independent of \(v\). Finally, since
\(f\in \mathcal{C}_c^\infty(A)\), there exists a compact set \(K\Subset A\) such that
\[
\supp f\subset K.
\]
Therefore
\[
f(a)=0,\qquad a\in A\setminus K,
\]
which immediately implies
\[
(D_v)^{k+1}u(a,v)=0,
\qquad
a\in A\setminus K,\quad v\in \mathbb S^{n-1}.
\]
Thus all requirements in \emph{(iii)} are satisfied.

We now prove sufficiency. Suppose that \(g\) satisfies conditions \emph{(i)}--\emph{(iii)}. We shall explicitly construct the function $f$ such that $g = C^kf$.

We define $u$ by the given limit:
\[
u(a,\beta):=\lim_{s\to 1}
\frac{g(a,\beta,s)}{|\mathbb S^{n-2}|(1-s^2)^{(n-2)/2}}.
\]
Since \((D_v)^{k+1}u(a,v)\) is independent of \(v\), we may define a function
\[
f(a):=\frac{(-1)^{k+1}}{k!}(D_v)^{k+1}u(a,v),
\qquad a\in A.
\]

By definition, $f$ is smooth and we have
\[
(D_v)^{k+1}u(a,v)=(-1)^{k+1}k!\,f(a),
\qquad a\in A,\ v\in \mathbb S^{n-1}.
\]

The support condition in \emph{(iii)} gives a compact set \(K\Subset A\) such that
\[
(D_v)^{k+1}u(a,v)=0,
\qquad
a\in A\setminus K,\quad v\in\mathbb S^{n-1}.
\]
Therefore
\[
f(a)=\frac{(-1)^{k+1}}{k!}(D_v)^{k+1}u(a,v)=0,
\qquad
a\in A\setminus K.
\]
Hence \(\supp f\subset K\Subset A\), and so \(f\in C_c^\infty(A)\).

Together with the boundary conditions in \emph{(iii)}, this shows that \(u\) satisfies the boundary value problem \eqref{eq:BVP}.

Hence, by \Cref{t:rangeRayBall},
\[
u(a,v)=R_a^k f(v).
\]

It remains to show that $g(a,\beta,s)=S(u(a,\cdot))(\beta,s)$. For each fixed \(a\in \bar{A}\), condition \emph{(iii)} implies that
\[
\lim_{s\to 1}\frac{g(a,\beta,s)}{|\mathbb S^{n-2}|(1-s^2)^{(n-2)/2}}
= u(a,\beta).
\]
By the evenness condition \emph{(i)}, we have
\[
\lim_{s\to -1}\frac{g(a,\beta,s)}{|\mathbb S^{n-2}|(1-s^2)^{(n-2)/2}}
=
\lim_{t\to 1}\frac{g(a,-\beta,t)}{|\mathbb S^{n-2}|(1-t^2)^{(n-2)/2}}
= u(a,-\beta).
\]
Together with conditions \emph{(i)} and \emph{(ii)}, in view of \Cref{t:rangeSST}, this implies that
\[
g(a,\beta,s)=S(u(a,\cdot))(\beta,s).
\]
Substituting \(u(a,v)=R_a^k f(v)\), we obtain
\[
g(a,\beta,s)=S(R_a^k f)(\beta,s)=C^k f(a,\beta,s).
\]

The uniqueness of \(f\) follows from the explicit reconstruction formula
\[
f(a)=\frac{(-1)^{k+1}}{k!}(D_v)^{k+1}u(a,v),
\]
since \(u\) is uniquely determined by \(g\).
\end{proof}

\section{Range characterization of the Compton transform for infinite planar detectors}
\label{s:rangeCompton}
In the previous section, we considered the conical Radon transform where the cone vertex set is a bounded convex subset of $\mathbb{R}^n$ containing the support of the unknown function. We now turn to the second vertex geometry considered in this paper, motivated by Compton camera imaging. Here the cone vertices represent detector locations and are disjoint from the support of the radiation density. Following the terminology introduced in the introduction, we refer to the corresponding cone integral transform as the \emph{Compton transform}.

Throughout this section, we set
\[
A=\mathbb{R}^{n-1}
\]
and identify \(A\) with the detector hyperplane \(A\times\{0\}\subset
\mathbb{R}^n\). We assume
\[
f\in \mathcal{C}_c^\infty(\mathbb{R}^n_+),
\qquad
\mathbb{R}^n_+:=\mathbb{R}^{n-1}\times(0,\infty),
\]
so that
\[
\supp f\cap (A\times\{0\})=\emptyset;
\]
see Fig.~\ref{f:Compton_planar}.

\begin{figure}[htbp]
\centering
\includegraphics[width=0.45\textwidth]{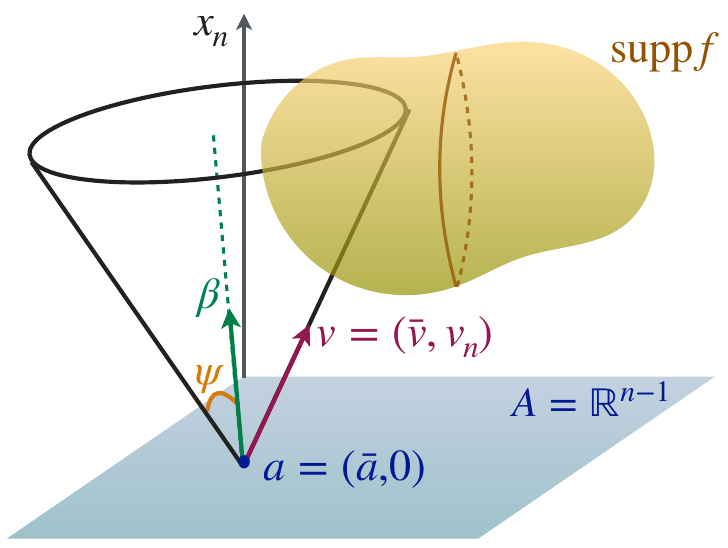}
\caption{Vertex set geometry for the Compton transform.}
\label{f:Compton_planar}
\end{figure}

As in the previous section, we begin with the range characterization of the
corresponding $k$-weighted divergent beam transform. In the present geometry,
the source set of the divergent beam transform coincides with the planar vertex
set \(A\).

\subsection{\texorpdfstring{\boldmath Range of the $k$-weighted divergent beam transform with source set $A=\mathbb{R}^{n-1}$ disjoint from $\supp f$}{}}

For \(\bar a\in A\), the corresponding source point is
\[
a=(\bar a,0)\in\mathbb{R}^n,
\qquad
\bar a=(a_1,\dots,a_{n-1}).
\]
We consider rays having directions in the upper hemisphere
\[
\mathbb{S}^{n-1}_+
:=
\{v=(\bar v,v_n)\in\mathbb{S}^{n-1}: v_n>0\}.
\]
Throughout this section, the bar notation is reserved for elements of
\(\mathbb{R}^{n-1}\).

For \(f\in C_c^\infty(\mathbb{R}^n_+)\), we recall that the \(k\)-weighted
divergent beam transform with source point \(a=(\bar a,0)\) and direction
\(v\in\mathbb{S}^{n-1}_+\) is
\[
R^k f(a,v)=R_a^k f(v)
:=
\int_0^\infty f(a+r v)r^k\,dr.
\]

\begin{theorem}[Range characterization of the $k$-weighted divergent beam transform]\label{t:rangeRayPlane} 
Let $n\ge 2$ and $k\in \mathbb{N}_0$.
A function
\[
u\in \mathcal{C}^\infty(\mathbb{R}^{n-1}\times \mathbb{S}^{n-1}_+),
\]
belongs to the range of the $k$-weighted divergent beam transform, that is, there exists $f\in \mathcal{C}_c^\infty(\mathbb{R}^n_+)$ such that $u(\bar{a}, v) = R^k f(a, v)$ for $a = (\bar{a}, 0) \in \mathbb{R}^{n-1} \times \{0\}$, if and only if the scaled projective data 
\begin{align}\label{scaled_proj_data}
w(\bar{a},\bar{p})  = v_n^{k+1} u (\bar{a}, v), \quad \bar{p} = \frac{\bar{v}}{v_n} \in \mathbb{R}^{n-1},
\end{align}
and its Fourier transform with respect to $\bar{p}$,
\[
W(\bar{a}, \bar{\xi}) = \int_{\mathbb{R}^{n-1}} e^{-i\bar{p}\cdot \bar{\xi}} w(\bar{a}, \bar{p}) \,d\bar{p},
\]
satisfy the following conditions:

\begin{enumerate}
\item \textbf{Compact Support:} For every fixed $\bar{a} \in \mathbb{R}^{n-1}$, the function $w(\bar{a}, \cdot)$ has compact support.
\item \textbf{PDE Condition:} There exists a function $H(\sigma, \bar{\xi})$ such that $W(\bar{a}, \bar{\xi}) = H(\bar{a}\cdot\bar{\xi}, \bar{\xi})$. Equivalently, 
\[
\bar{b} \cdot \nabla_{\bar{a}} W(\bar{a}, \bar{\xi}) = 0, \quad \text{for all } \bar{b}\in \mathbb{R}^{n-1} \text{ such that } \bar{b} \cdot \bar{\xi} = 0,
\]
which implies that $W$ is constant in $\bar a$ along affine hyperplanes orthogonal to
$\bar\xi \neq \mathbf{0}$.
\item \textbf{Support and Smoothness:} The inverse Fourier transform of the function $H$ in condition (ii) with respect to both variables, defined as
\[
h(t, \bar{p}) := \frac{1}{(2\pi)^n} \int_{\mathbb{R}^{n-1}} \int_{\mathbb{R}} e^{i(t\sigma + \bar{p}\cdot\bar{\xi})} H(\sigma, \bar{\xi}) \, d\sigma \, d\bar{\xi},
\]
is a smooth function on $\mathbb{R} \times \mathbb{R}^{n-1}$. Furthermore, there exist constants $0 < m_0 < M_0 < \infty$ and $R > 0$ such that the support of $h$ satisfies
\[
\supp h \subset \left\{ (t, \bar{p}) \in \mathbb{R} \times \mathbb{R}^{n-1} : t \in [-M_0, -m_0] \text{ and } |\bar{p}| \le -R t \right\}.
\]
\end{enumerate}
\end{theorem}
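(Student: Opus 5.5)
The plan is to compute $w$ explicitly in terms of $f$ and show that conditions (i)--(iii) encode exactly a change of variables that turns $H$ into a rescaled copy of $\hat f$. First, for $a=(\bar a,0)$ and $v=(\bar v,v_n)\in\mathbb S^{n-1}_+$, substitute $t=rv_n$ (the height of the point $a+rv$) in $R^kf(a,v)$. Then $a+rv=(\bar a+t\bar p,\,t)$ with $\bar p=\bar v/v_n$, and $r^k\,dr=v_n^{-k-1}t^k\,dt$. Hence the scaled projective data \eqref{scaled_proj_data} become
\[
w(\bar a,\bar p)=\int_0^\infty f(\bar a+t\bar p,\,t)\,t^k\,dt .
\]
If $\supp f\subset\{|\bar x|\le\rho,\ m\le t\le M\}$ with $0<m<M$, then $w(\bar a,\cdot)$ vanishes for $|\bar p|>(\rho+|\bar a|)/m$, which gives (i). Next I take the Fourier transform in $\bar p$ and substitute $\bar y=\bar a+t\bar p$ for fixed $t>0$. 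Writing $\hat f(\bar\eta,t)$ for the partial Fourier transform of $f$ in $\bar x$, this gives
\[
W(\bar a,\bar\xi)=\int_0^\infty t^{k-n+1}e^{i\bar a\cdot\bar\xi/t}\,\hat f(\bar\xi/t,t)\,dt=:H(\bar a\cdot\bar\xi,\bar\xi),
\]
which is (ii). The equivalent differential form follows because $\bar b\cdot\nabla_{\bar a}$ kills any function of $\bar a\cdot\bar\xi$ when $\bar b\perp\bar\xi$.

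For (iii), I invert $H$ in both variables. The $\sigma$-integral gives $\int e^{i\sigma(\tau+1/t)}\,d\sigma=2\pi\,\delta(\tau+1/t)$, which localizes at $t=-1/\tau$ with Jacobian $t^2$. The $\bar\xi$-integral then returns $t^{n-1}f(t\bar p,t)$. The result is the explicit formula
\[
h(\tau,\bar p)=t^{k+2}f(t\bar p,t)\Big|_{t=-1/\tau}\quad(\tau<0),\qquad h(\tau,\bar p)=0\quad(\tau\ge 0).
\]
This is smooth, and its support lies in $\tau\in[-1/m,-1/M]$ with $|\bar p|\le\rho/t=-\rho\tau$. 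So (iii) holds with $m_0=1/M$, $M_0=1/m$, $R=\rho$. To make this rigorous, I would verify it in the opposite direction: compute the forward Fourier transform of the candidate $h$ directly, using the substitution $\tau=-1/t$, and check that it equals the integral defining $H$. This avoids manipulating $\delta$ formally.

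For sufficiency, suppose $u$ satisfies (i)--(iii) and let $h$ be as in (iii). I define
\[
f(\bar x,t):=t^{-k-2}\,h(-1/t,\ \bar x/t),\qquad t>0 .
\]
The support condition in (iii) gives $\supp f\subset\{1/M_0\le t\le 1/m_0,\ |\bar x|\le R\}$, so $f\in\mathcal C_c^\infty(\mathbb R^n_+)$. By the forward computation above, $\mathcal F h=H$ equals the $H$ built from this $f$, so $W(\bar a,\bar\xi)=H(\bar a\cdot\bar\xi,\bar\xi)$ coincides with the $W$ of $R^kf$. Condition (i) makes $w(\bar a,\cdot)$ compactly supported, so the Fourier transform is legitimate and Fourier inversion in $\bar p$ gives $w=$ the scaled data of $R^kf$. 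Finally $u=v_n^{-k-1}w=R^kf$.

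The main obstacle is bookkeeping the function spaces: $H$ is only known through $W$, and one must make sense of its two-variable inverse transform. I would note that for $\bar\xi\neq 0$ the quantity $\bar a\cdot\bar\xi$ ranges over all of $\mathbb R$, so $H$ is determined off the null set $\bar\xi=0$. I would then interpret $H$ as a tempered distribution, with (iii) asserting that its inverse transform is a smooth compactly supported function; that hypothesis is exactly what makes the reconstructed $f$ smooth and compactly supported. The Jacobian and $\delta$-function computation should be checked carefully through the explicit forward substitution $\tau=-1/t$.
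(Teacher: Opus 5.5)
Your proposal is correct and follows essentially the same route as the paper: the substitution $t=rv_n$, the factorization $W(\bar a,\bar\xi)=H(\bar a\cdot\bar\xi,\bar\xi)$, and the explicit correspondence $h(\tau,\bar p)=(-\tau)^{-k-2}f(-\bar p/\tau,-1/\tau)$ with its inverse $f(\bar x,t)=t^{-k-2}h(-1/t,\bar x/t)$. Your forward substitution $\tau=-1/t$ is a cleaner justification of the paper's formal $\delta$-function steps, and in sufficiency you reuse the necessity identity plus Fourier inversion instead of recomputing $\int h(t,\bar p-t\bar a)\,dt$; the one piece you omit is the converse half of the ``equivalently'' in (ii) (differential condition implies factorization), which the paper gets by integrating $\nabla_{\bar a}W$ along segments inside the hyperplanes $\{\bar a\cdot\bar\xi=\sigma\}$.
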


\begin{proof}
We first prove necessity. Assume that there exists $f\in \mathcal{C}_c^\infty(\mathbb{R}^n_+)$ such that $u(\bar{a}, v) = R^k f(a, v)$ for sources $a = (\bar{a}, 0) \in \R^{n-1} \times \{0\}$.

Condition \emph{(i)} is satisfied since $\supp f$ is compact. Indeed, for any fixed $\bar{a} \in \R^{n-1}$, the divergent beam transform $R^k f(a, v) = u(\bar{a}, v)$ vanishes for all directions $v$ such that the ray starting at $a$ and traveling in direction $v$ misses the support of $f$, and thus $w(\bar{a}, \cdot)$ has compact support.

Now we show that condition \emph{(ii)} holds.
For $v_n > 0$, we substitute the projective variable $\bar{p} = \bar{v}/v_n$ and $x_n = v_n r$ into the transform definition to extract the scaled projective data:
\begin{equation*}
w(\bar{a}, \bar{p}) := v_n^{k+1} u(\bar{a}, v) = \int_0^\infty f(\bar{a} + x_n \bar{p}, x_n) x_n^k \, dx_n.
\end{equation*}
We then take the Fourier transform of $w$ with respect to $\bar{p}$:
\begin{equation*}
W(\bar{a}, \bar{\xi}) = \int_{\mathbb{R}^{n-1}} e^{-i\bar{p}\cdot \bar{\xi}} \left[ \int_0^\infty f(\bar{a} + x_n \bar{p}, x_n) x_n^k \, dx_n \right] d\bar{p}.
\end{equation*}
Letting $\bar{x} = \bar{a} + x_n \bar{p}$, and using Fubini's theorem, we have
\begin{align*}
W(\bar{a}, \bar{\xi}) &= \int_0^\infty x_n^k \left[ \int_{\mathbb{R}^{n-1}} e^{-i \left(\frac{\bar{x} - \bar{a}}{x_n}\right) \cdot \bar{\xi}} f(\bar{x}, x_n) x_n^{1-n} \, d\bar{x} \right] dx_n \\
&= \int_0^\infty x_n^{k-n+1} e^{i \frac{\bar{a}\cdot\bar{\xi}}{x_n}} \left[ \int_{\mathbb{R}^{n-1}} f(\bar{x}, x_n) e^{-i \bar{x} \cdot \left(\frac{\bar{\xi}}{x_n}\right)} \, d\bar{x} \right] dx_n.
\end{align*}
Let $F(\bar{\eta}, x_n)$ denote the Fourier transform of $f(\bar{x},x_n)$ with respect to $\bar{x}$. 
Then,
\begin{equation*}
W(\bar{a}, \bar{\xi}) = \int_0^\infty x_n^{k-n+1} e^{i \frac{\bar{a}\cdot\bar{\xi}}{x_n}} F\left(\frac{\bar{\xi}}{x_n}, x_n\right) \, dx_n.
\end{equation*}
Next we change variables by letting $\lambda = 1/x_n$. As $x_n \in (0, \infty)$, we have $\lambda \in (0, \infty)$ and $dx_n = -\frac{1}{\lambda^2} d\lambda$. Thus,
\begin{align*}
W(\bar{a}, \bar{\xi}) &= \int_0^\infty \left(\frac{1}{\lambda}\right)^{k-n+1} e^{i \lambda (\bar{a}\cdot\bar{\xi})} F\left(\lambda\bar{\xi}, \frac{1}{\lambda}\right) \frac{1}{\lambda^2} \, d\lambda \\
&= \int_0^\infty \lambda^{n-k-3} e^{i \lambda (\bar{a}\cdot\bar{\xi})} F\left(\lambda\bar{\xi}, \frac{1}{\lambda}\right) \, d\lambda.
\end{align*}
Notice that the right-hand side depends on $\bar{a}$ only through the dot product $\sigma = \bar{a}\cdot\bar{\xi}$. Therefore, the condition \emph{(ii)} is satisfied with
\begin{equation*}
H(\sigma, \bar{\xi}) := \int_0^\infty \lambda^{n-k-3} e^{i \lambda \sigma} F\left(\lambda\bar{\xi}, \frac{1}{\lambda}\right) \, d\lambda.
\end{equation*}

Lastly, we verify condition \emph{(iii)}.
To compute $h(t, \bar{p})$, we first compute the inverse Fourier transform of $H(\sigma, \bar{\xi})$ with respect to $\sigma$, denoted $\tilde{H}(t, \bar{\xi})$. By definition of $H$ and the Fubini's theorem, we have
\begin{align*}
\tilde{H}(t, \bar{\xi}) &= \frac{1}{2\pi} \int_{\mathbb{R}} e^{i\sigma t} \left[ \int_0^\infty \lambda^{n-k-3} e^{i \lambda \sigma} F\left(\lambda\bar{\xi}, \frac{1}{\lambda}\right) \, d\lambda \right] d\sigma \\
&= \int_0^\infty \lambda^{n-k-3} F\left(\lambda\bar{\xi}, \frac{1}{\lambda}\right) \delta(t + \lambda) \, d\lambda.
\end{align*}
By definition of the Dirac delta distribution, we obtain

\begin{equation*}
\tilde{H}(t, \bar{\xi}) = 
\begin{cases} 
(-t)^{n-k-3} F\left(-t\bar{\xi}, -\frac{1}{t}\right), & \text{if } t < 0 \\
0, & \text{if } t \ge 0.
\end{cases}
\end{equation*}
Now we compute $h(t, \bar{p})$ by taking the inverse Fourier transform of $\tilde{H}(t, \bar{\xi})$ with respect to $\bar{\xi}$. For $t < 0$, we have:
\begin{equation*}
h(t, \bar{p}) = \frac{1}{(2\pi)^{n-1}} \int_{\mathbb{R}^{n-1}} e^{i\bar{p}\cdot\bar{\xi}} (-t)^{n-k-3} F\left(-t\bar{\xi}, -\frac{1}{t}\right) \, d\bar{\xi}.
\end{equation*}
Applying the change of variables $\bar{\eta} = -t\bar{\xi}$, we get 
\begin{align*}
h(t, \bar{p}) &= (-t)^{-k-2} \frac{1}{(2\pi)^{n-1}} \int_{\mathbb{R}^{n-1}} e^{i\left(-\frac{\bar{p}}{t}\right)\cdot\bar{\eta}} F\left(\bar{\eta}, -\frac{1}{t}\right) \, d\bar{\eta} \\
&= (-t)^{-k-2} f\left(-\frac{\bar{p}}{t}, -\frac{1}{t}\right).
\end{align*}
Therefore,
\begin{equation*}
h(t, \bar{p}) = 
\begin{cases} 
(-t)^{-k-2} f\left(-\frac{\bar{p}}{t}, -\frac{1}{t}\right), & \text{if } t < 0 \\
0, & \text{if } t \ge 0.
\end{cases}
\end{equation*}

Recall that $f \in \mathcal{C}_c^\infty(\mathbb{R}^n_+)$. Thus, $f$ is smooth, and there exist constants $M_0 > m_0 > 0$ and $R > 0$ such that $f(\bar{x}, x_n) = 0$ whenever $x_n \notin [1/M_0, 1/m_0]$ or $|\bar{x}| > R$. Consequently, $h(t, \bar{p})$ vanishes unless $-1/t \in [1/M_0, 1/m_0]$, which implies $t \in [-M_0, -m_0]$. Furthermore, $h(t, \bar{p})$ vanishes unless $|-\bar{p}/t| \le R$, which exactly means $|\bar{p}| \le -R t$. Since $f$ is smooth and the singularity at $t=0$ is strictly avoided, $h$ is smooth everywhere. This establishes necessity.

We now prove sufficiency.
Assume that we are given a smooth function $u$ on $\mathbb{R}^{n-1}\times \mathbb{S}^{n-1}_+$ satisfying conditions \emph{(i)}--\emph{(iii)}. We will explicitly construct a function $f$ so that $R^kf=u$.

By condition \emph{(iii)}, the function $h(t, \bar{p})$ is smooth and strictly supported in the region where $t \in [-M_0, -m_0]$ and $|\bar{p}| \le -R t$. We define the function $f(\bar{x}, x_n)$ directly from $h$ via the transformation derived in necessity:
\begin{equation}\label{f_def_from_h}
f(\bar{x}, x_n) := 
\begin{cases}
x_n^{-k-2} h\left(-\frac{1}{x_n}, \frac{\bar{x}}{x_n}\right), & \text{for } x_n > 0 \\
0, & \text{for } x_n \le 0.
\end{cases}
\end{equation}
Since $h(t, \bar{p}) = 0$ for $t \notin [-M_0, -m_0]$, it follows that $f(\bar{x}, x_n) = 0$ whenever $-1/x_n \notin [-M_0, -m_0]$, meaning $x_n \notin [1/M_0, 1/m_0]$. Because $f$ is identically zero in a neighborhood of $x_n = 0$, the composition is infinitely differentiable, ensuring $f \in \mathcal{C}^\infty(\mathbb{R}^n)$. Furthermore, since $h(t, \bar{p}) = 0$ whenever $|\bar{p}| > -R t$, it follows that $f(\bar{x}, x_n) = 0$ whenever $|\bar{x}/x_n| > -R(-1/x_n) = R/x_n$, which simplifies to $|\bar{x}| > R$. Therefore, $f$ has compact support in both variables. Thus $f \in \mathcal{C}_c^\infty(\mathbb{R}^n_+)$.

It remains to show that \(R^k f=u\). By \eqref{scaled_proj_data}, it is enough
to prove
\[
v_n^{k+1}R^k f(a,v)=w(\bar a,\bar p),
\qquad
\bar p=\frac{\bar v}{v_n}, \quad v = (\bar v, v_n).
\]

Letting \(x_n=v_n r\), we have
\[
v_n^{k+1}R^k f(a,v)
=
\int_0^\infty f(\bar a+x_n\bar p,x_n)x_n^k\,dx_n.
\]
Using \eqref{f_def_from_h}, we obtain
\[
v_n^{k+1} R^k f(a,v) = \int_0^\infty x_n^{-2} h\left(-\frac{1}{x_n}, \frac{\bar{a}}{x_n} + \bar{p}\right) dx_n.
\]
Next we change variables by letting $t = -1/x_n$, which implies $dx_n = x_n^2 dt$, and thus the integral becomes
\begin{align}\label{h_integral}
v_n^{k+1} R^k f(a,v) = \int_{-\infty}^0 h(t, \bar{p} - t\bar{a}) \, dt = \int_{\mathbb{R}} h(t, \bar{p} - t\bar{a}) \, dt,
\end{align}
since $h$ vanishes for $t \ge 0$ by condition \emph{(iii)}.

By definition, $h$ is the full inverse Fourier transform of $H$:
\[
h(t, \bar{y}) = \frac{1}{(2\pi)^n} \int_{\mathbb{R}^{n-1}} \int_{\mathbb{R}} e^{i(t\sigma + \bar{y}\cdot\bar{\xi})} H(\sigma, \bar{\xi}) \, d\sigma \, d\bar{\xi}.
\]
Substituting $\bar{y} = \bar{p} - t\bar{a}$ gives
\begin{align*}
h(t, \bar{p} - t\bar{a}) &= \frac{1}{(2\pi)^n} \int_{\mathbb{R}^{n-1}} \int_{\mathbb{R}} e^{i(t\sigma + (\bar{p} - t\bar{a})\cdot\bar{\xi})} H(\sigma, \bar{\xi}) \, d\sigma \, d\bar{\xi} \\
&= \frac{1}{(2\pi)^n} \int_{\mathbb{R}^{n-1}} \int_{\mathbb{R}} e^{i\bar{p}\cdot\bar{\xi}} e^{it(\sigma - \bar{a}\cdot\bar{\xi})} H(\sigma, \bar{\xi}) \, d\sigma \, d\bar{\xi}.
\end{align*}
We now insert this expression back into \eqref{h_integral}. Because $h$ is smooth and compactly supported, its Fourier transform $H$ is a Schwartz function. Thus, in the sense of tempered distributions, we have
\[
v_n^{k+1} R^k f(a,v) = \frac{1}{(2\pi)^n} \int_{\mathbb{R}^{n-1}} e^{i\bar{p}\cdot\bar{\xi}} \int_{\mathbb{R}} H(\sigma, \bar{\xi}) \left[ \int_{\mathbb{R}} e^{it(\sigma - \bar{a}\cdot\bar{\xi})} dt \right] d\sigma \, d\bar{\xi}.
\]
The innermost integral over $t$ evaluates to the Dirac delta distribution, $2\pi \delta(\sigma - \bar{a}\cdot\bar{\xi})$. Evaluating the $\sigma$-integral against this delta distribution yields
\[
v_n^{k+1} R^k f(a,v) = \frac{1}{(2\pi)^{n-1}} \int_{\mathbb{R}^{n-1}} e^{i\bar{p}\cdot\bar{\xi}} H(\bar{a}\cdot\bar{\xi}, \bar{\xi}) \, d\bar{\xi}.
\]
By condition \emph{(ii)}, we know that $H(\bar{a}\cdot\bar{\xi}, \bar{\xi}) = W(\bar{a}, \bar{\xi})$. We thus obtain
\[
v_n^{k+1} R^k f(a,v) = \frac{1}{(2\pi)^{n-1}} \int_{\mathbb{R}^{n-1}} e^{i\bar{p}\cdot\bar{\xi}} W(\bar{a}, \bar{\xi}) \, d\bar{\xi} = w(\bar{a}, \bar{p}),
\]
since $w$ is the inverse Fourier transform of $W$ in $\bar \xi$. Because $v_n > 0$, we conclude that $R^k f(a,v) = u(\bar{a}, v)$, which completes the proof of sufficiency. 

Finally, we show that the two statements in condition \emph{(ii)} are equivalent. First, assume $W(\bar{a}, \bar{\xi}) = H(\sigma, \bar{\xi})$ where $\sigma = \bar{a} \cdot \bar{\xi}$. Direct computation yields that 
\[
\bar{b} \cdot \nabla_{\bar{a}} W(\bar{a},\bar{\xi}) = \bar{b} \cdot \nabla_{\bar{a}} H(\sigma,\bar{\xi}) 
= \bar{b} \cdot \left( \bar{\xi}\frac{\partial H}{\partial \sigma}  \right) 
= (\bar{b} \cdot \bar{\xi}) \frac{\partial H}{\partial \sigma}  = 0,
\]
for all $\bar{b}\in \mathbb{R}^{n-1}$ such that  $\bar{b} \cdot \bar{\xi} = 0$.

Conversely, assume that
$\bar b\cdot \nabla_{\bar a}W(\bar a,\bar\xi)=0$
for all $\bar b\in\mathbb R^{n-1}$
with $\bar b\cdot\bar\xi=0, \; \bar\xi=0.$
Let \(\bar a_1,\bar a_2\in\mathbb R^{n-1}\)
satisfy
$\bar a_1\cdot\bar\xi=\bar a_2\cdot\bar\xi.$
Then \((\bar a_1-\bar a_2)\cdot\bar\xi=0\). Hence, by the Fundamental Theorem of Calculus and the assumed differential condition, we obtain
\[
W(\bar{a}_1, \bar{\xi}) - W(\bar{a}_2, \bar{\xi}) = \int_{0}^{1} (\bar{a}_1 - \bar{a}_2) \cdot \nabla_{\bar{a}} W(\bar{a}_2 + t(\bar{a}_1 - \bar{a}_2), \bar{\xi}) \, dt=0,
\]
which implies 
\[
W(\bar{a}_1, \bar{\xi}) = W(\bar{a}_2, \bar{\xi}).
\]
Since $W$ takes the same value for all $\bar{a}$ having the same dot product with $\bar{\xi}$, it follows that $W$ is a function of that dot product $\bar{a}\cdot\bar{\xi}$ and we can write $W(\bar{a}, \bar{\xi}) = H(\bar{a} \cdot \bar{\xi}, \bar{\xi})$.
\end{proof}

As a corollary, we provide a moment-based range characterization of the $k$-weighted divergent beam transform, extending the work of \cite{ClackdoyleDesbat2013} from the three-dimensional case to general dimensions $n \ge 2$. Moreover, while they considered the unweighted divergent beam transform, our work accommodates more general weighted versions; setting $k=0$ in our result directly recovers theirs. We further note that the conditions presented in \cite[Theorem 5]{ClackdoyleDesbat2013} do not constitute a full set of consistency conditions, as they omit condition \emph{(iii)}, which is crucial to guarantee that $f \in \mathcal{C}^\infty_c(\mathbb{R}^n_+)$.

\begin{corollary}[Moment-based range characterization of the $k$-weighted divergent beam transform]\label{cor:moment-ray}
Let $n\ge 2$ and $k\in \mathbb{N}_0$. Let 
\[
u\in \mathcal{C}^\infty(\mathbb{R}^{n-1}\times \mathbb{S}^{n-1}_+).
\]
Then, $u$ belongs to the range of the $k$-weighted divergent beam transform, that is, there exist $f\in \mathcal{C}_c^\infty(\mathbb{R}^n_+)$ such that $u(\bar{a}, v) = R^k f(a, v)$ for $a = (\bar{a}, 0)$ with $\bar{a}\in \mathbb{R}^{n-1}$, if and only if the scaled projective data 
\[
w(\bar{a},\bar{p}) = v_n^{k+1} u (\bar{a}, v), \quad \bar{p} = \frac{\bar{v}}{v_n} \in \mathbb{R}^{n-1},
\]
satisfies the following conditions:

\begin{enumerate}
\item \textbf{Compact Support:} For every fixed $\bar{a} \in \mathbb{R}^{n-1}$, the function $w(\bar{a}, \cdot)$ has compact support.

\item \textbf{Moment Condition:} For every $m \in \mathbb{N}_0$, the $m$-th order moment
\[
J_m(\bar{a}, \bar{\xi}) := \int_{\mathbb{R}^{n-1}} (\bar{p}\cdot \bar{\xi})^m w(\bar{a}, \bar{p}) \,d\bar{p},
\]
is a homogeneous polynomial of degree $m$ in $\bar{\xi}$ and $\bar{a}\cdot\bar{\xi}$. That is, there exist homogeneous polynomials $c_{m,j}(\bar{\xi})$ of degree $m-j$ in $\bar{\xi}$ for $j = 0,\cdots, m$, such that
\[
J_m(\bar{a}, \bar{\xi}) = \sum_{j=0}^m c_{m,j}(\bar{\xi}) (\bar{a}\cdot\bar{\xi})^j.
\]

\item \textbf{Paley-Wiener Growth:} 
Define the Taylor coefficients $Q_j(\bar{\xi})$ by summing over the moment coefficients $c_{m,j}(\bar{\xi})$ for $m\ge j$:
\[
Q_j(\bar{\xi}) := \sum_{m=j}^\infty \frac{(-i)^m}{m!} c_{m,j}(\bar{\xi}).
\]
The formal power series
\[
H(\sigma, \bar{\xi}) := \sum_{j=0}^\infty Q_j(\bar{\xi}) \sigma^j
\]
extends to an entire function on $\mathbb{C} \times \mathbb{C}^{n-1}$. Furthermore, there exist constants $0 < m_0 < M_0 < \infty$ and $R > 0$ such that for every integer $N \ge 0$, there is a constant $C_N > 0$ satisfying the bound:
\begin{align}\label{PW_bound}
|H(\sigma, \bar{\xi})| \le C_N (1 + |\sigma| + |\bar{\xi}|)^{-N} \exp \left( \max_{t \in \{-m_0, -M_0\}} t \big( \operatorname{Im}\sigma - R |\operatorname{Im}\bar{\xi}| \big) \right).
\end{align}
\end{enumerate}
\end{corollary}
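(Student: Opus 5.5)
The plan is to reduce the corollary to \Cref{t:rangeRayPlane}, showing that conditions (ii)–(iii) of the corollary are equivalent to conditions (ii)–(iii) of the theorem when condition (i) holds. Condition (i) is the same in both statements.

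The link between them is the Taylor expansion of $W$ in $\bar\xi$. Since $w(\bar a,\cdot)$ is smooth and compactly supported, $W(\bar a,\cdot)$ is entire, and
\[
W(\bar a,\bar\xi)=\sum_{m=0}^\infty \frac{(-i)^m}{m!}J_m(\bar a,\bar\xi),
\]
with each $J_m(\bar a,\cdot)$ homogeneous of degree $m$ in $\bar\xi$. The series converges absolutely and locally uniformly.

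\emph{Necessity.} Assume $u=R^kf$. From the proof of \Cref{t:rangeRayPlane}, $W(\bar a,\bar\xi)=H(\bar a\cdot\bar\xi,\bar\xi)$, and $H$ is the full Fourier transform of $h$. Condition (iii) of the theorem makes $h$ smooth with compact support in the set described there.

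For the moments, substitute $\bar p\cdot\bar\xi$ directly:
\[
J_m(\bar a,\bar\xi)=\int_0^\infty\!\!\int f(\bar x,x_n)\Big(\tfrac{(\bar x-\bar a)\cdot\bar\xi}{x_n}\Big)^m x_n^{k+1-n}\,d\bar x\,dx_n .
\]
Expanding binomially in $\bar a\cdot\bar\xi$ shows that $J_m$ has the stated form, with
\[
c_{m,j}(\bar\xi)=\binom{m}{j}(-1)^j\int f\,(\bar x\cdot\bar\xi)^{m-j}x_n^{k+1-n-m}.
\]
These are homogeneous of degree $m-j$, which gives (ii).

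Regrouping the series for $W$ by powers of $\sigma=\bar a\cdot\bar\xi$ identifies the $Q_j$ as the Taylor coefficients of $H(\cdot,\bar\xi)$ at $\sigma=0$. The regrouping is justified by absolute convergence, since $f$ is compactly supported away from $x_n=0$. Finally, the Paley–Wiener–Schwartz theorem applied to $h$ gives \eqref{PW_bound}. The bound is
\[
\sup_{(t,\bar p)\in\supp h} e^{t\operatorname{Im}\sigma+\bar p\cdot\operatorname{Im}\bar\xi}\le \sup_{t\in[-M_0,-m_0]} e^{t(\operatorname{Im}\sigma-R|\operatorname{Im}\bar\xi|)},
\]
and the exponent is affine in $t$, so the maximum is attained at an endpoint $t\in\{-m_0,-M_0\}$. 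This yields exactly the exponential in \eqref{PW_bound}.

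\emph{Sufficiency.} Given (i)–(iii) of the corollary, the function $H$ is entire. Using absolute convergence again, reorder the series to get
\[
W(\bar a,\bar\xi)=\sum_m\frac{(-i)^m}{m!}J_m=\sum_j Q_j(\bar\xi)(\bar a\cdot\bar\xi)^j=H(\bar a\cdot\bar\xi,\bar\xi),
\]
which is condition (ii) of the theorem. For condition (iii), the bound \eqref{PW_bound} is a Paley–Wiener–Schwartz estimate for the convex compact set
\[
K=\{t\in[-M_0,-m_0],\ |\bar p|\le -Rt\}.
\]
Its support function is $\max_{t}(-t)(\ldots)$ in appropriate sign conventions, and it is at most the exponent in \eqref{PW_bound}. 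The theorem then gives that $h=\mathcal F^{-1}H$ is smooth and supported in $K$. Applying \Cref{t:rangeRayPlane} completes the argument.

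\emph{Main obstacle.} The hardest step is the rearrangement of the double series $\sum_m\sum_j$. Absolute convergence has to be proved from the data alone, without knowing $f$ in advance. My first approach would be to bound $|c_{m,j}|$ by Cauchy estimates of the entire function $\bar\xi\mapsto W(\bar a,\bar\xi)$ at two different values of $\bar a$, and use the polynomial structure in $\bar a\cdot\bar\xi$ to separate the coefficients. If that fails, I would instead take the entirety of $H$ as the hypothesis that licenses the identification, and check agreement only on a dense set. The second delicate point is matching the sign conventions of the support function with the $\max_{t}$ formula, which is only a bookkeeping check.
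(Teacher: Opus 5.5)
Your proposal follows the paper's route. Condition (i) is shared, the expansion $W(\bar a,\bar\xi)=\sum_m\frac{(-i)^m}{m!}J_m(\bar a,\bar\xi)$ links the moment condition to the factorization $W=H(\bar a\cdot\bar\xi,\bar\xi)$, and the Paley--Wiener theorem for the compact convex set $K$ turns the growth bound into the support condition of Theorem~\ref{t:rangeRayPlane}. The only real difference is in necessity. The paper obtains $c_{m,j}=\frac{m!}{(-i)^m}A_{j,m-j}$ by matching homogeneous Taylor components of the entire function $H$, without returning to $f$. You instead compute $J_m$ directly from $f$. Your route gives closed forms, e.g.\ $Q_j(\bar\xi)=\frac{i^j}{j!}\int f(\bar x,x_n)\,e^{-i\bar x\cdot\bar\xi/x_n}\,x_n^{k+1-n-j}\,d\bar x\,dx_n=\frac{1}{j!}\partial_\sigma^jH(0,\bar\xi)$, and absolute convergence comes for free, at the cost of a little more computation.

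In sufficiency, your fallback is the correct argument. It is also what the paper tacitly uses when it says the Taylor expansion of $H(\bar a\cdot\bar\xi,\bar\xi)$ is $\sum_j\sum_{m\ge j}$ ``by construction''. Your first idea would not work, for two reasons:
\begin{itemize}
\item for fixed $\bar\xi$, $J_m$ is a degree-$m$ polynomial in $\bar a\cdot\bar\xi$, so two values of $\bar a$ cannot separate its $m+1$ coefficients;
\item condition (i) gives no bound on $\supp w(\bar a,\cdot)$ that is uniform in $\bar a$.
\end{itemize}
The precise version of the fallback runs as follows. On each ray $\bar\xi=t\omega$, the defining series of $Q_j$ is a power series in $t$ that converges for all real $t$. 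Hence it is the Taylor series of the entire function $\frac{1}{j!}\partial_\sigma^jH(0,\cdot)$. So $\frac{(-i)^m}{m!}c_{m,j}$ is exactly the degree-$(m-j)$ homogeneous Taylor component of $Q_j$, and absolute convergence of the multivariable Taylor series of the entire $H$ licenses the regrouping $\sum_j\sum_{m\ge j}=\sum_m\sum_{j\le m}$.

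One small correction: the support function of $K$ is equal to the exponent $\max_{t\in\{-m_0,-M_0\}}t\,(\mathrm{Im}\,\sigma-R|\mathrm{Im}\,\bar\xi|)$, exactly as you computed in necessity. In sufficiency you need exponent $\le H_K$, which is the opposite of the inequality you wrote, so state the equality instead.
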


\begin{remark}
The conditions in this corollary are equivalent to those in
\Cref{t:rangeRayPlane}, but they give a moment-based formulation of the same range characterization.

The moment condition (ii) is obtained by expanding the Fourier transform
$W(\bar a,\bar\xi)$ in its Taylor series with respect to $\bar\xi$. It expresses,
term-by-term, the same dependence on $\bar a$ through the scalar quantity
$\bar a\cdot\bar\xi$ that appears in the factorization
\[
W(\bar a,\bar\xi)=H(\bar a\cdot\bar\xi,\bar\xi).
\]
Condition (iii) is the Paley--Wiener form of the support and smoothness
condition in \Cref{t:rangeRayPlane}. Instead of imposing support restrictions on
the inverse Fourier transform $h(t,\bar p)$, it imposes the corresponding entire
extension and exponential growth bounds on $H(\sigma,\bar\xi)$.
\end{remark}

\begin{proof}[Proof of Corollary~\ref{cor:moment-ray}]
We show that conditions \emph{(ii)} and \emph{(iii)} in the corollary together
give an equivalent reformulation of the second and third conditions in
\Cref{t:rangeRayPlane}. Condition \emph{(i)} is identical in both statements.

Let $W(\bar a,\bar\xi)$ be the Fourier transform of $w(\bar a,\bar p)$ with
respect to $\bar p$. Since $w(\bar a,\cdot)$ has compact support for each fixed
$\bar a$, the function $W(\bar a,\bar\xi)$ is entire in $\bar\xi$ and admits the
Taylor expansion
\[
W(\bar a,\bar\xi)
=
\int_{\mathbb R^{n-1}}
e^{-i\bar p\cdot\bar\xi}
w(\bar a,\bar p)\,d\bar p
=
\int_{\mathbb R^{n-1}}
\sum_{m=0}^\infty
\frac{(-i)^m}{m!}
(\bar p\cdot\bar\xi)^m
w(\bar a,\bar p)\,d\bar p
=
\sum_{m=0}^\infty
\frac{(-i)^m}{m!}
J_m(\bar a,\bar\xi).
\]

First assume that $u$ satisfies the conditions of \Cref{t:rangeRayPlane}. Then
there exists a function $H(\sigma,\bar\xi)$ such that
\[
W(\bar a,\bar\xi)=H(\bar a\cdot\bar\xi,\bar\xi),
\]
and the inverse Fourier transform of $H$ is a smooth function supported in
\[
K
=
\left\{
(t,\bar p)\in\mathbb R\times\mathbb R^{n-1}
:
t\in[-M_0,-m_0],
\ |\bar p|\le -Rt
\right\}.
\]
By the Paley-Wiener theorem \cite[Theorem 7.3.1]{hormander2003analysis}, $H$ extends to an entire function on
$\mathbb C\times\mathbb C^{n-1}$ and satisfies the corresponding
Paley-Wiener growth estimate.

We now derive the moment condition. Since $H$ is entire, we can write its Taylor
expansion in the variable $\sigma$ as
\[
H(\sigma,\bar\xi)
=
\sum_{j=0}^\infty A_j(\bar\xi)\sigma^j,
\]
where each $A_j$ is entire in $\bar\xi$. Decomposing each $A_j$ into its Taylor components,
\[
A_j(\bar\xi)
=
\sum_{\ell=0}^\infty A_{j,\ell}(\bar\xi),
\]
where $A_{j,\ell}$ is homogeneous of degree $\ell$ in $\bar\xi$, and substituting
$\sigma=\bar a\cdot\bar\xi$ gives
\[
W(\bar a,\bar\xi)
=
\sum_{j=0}^\infty
\sum_{\ell=0}^\infty
A_{j,\ell}(\bar\xi)
(\bar a\cdot\bar\xi)^j.
\]
The terms homogeneous of degree $m$ in $\bar\xi$ are precisely
\[
\sum_{j=0}^m
A_{j,m-j}(\bar\xi)(\bar a\cdot\bar\xi)^j.
\]
On the other hand, the homogeneous term of degree $m$ in the Taylor expansion of
$W$ is
\[
\frac{(-i)^m}{m!}J_m(\bar a,\bar\xi).
\]
Therefore,
\[
J_m(\bar a,\bar\xi)
=
\sum_{j=0}^m
c_{m,j}(\bar\xi)(\bar a\cdot\bar\xi)^j,
\qquad
c_{m,j}(\bar\xi)
=
\frac{m!}{(-i)^m}A_{j,m-j}(\bar\xi).
\]
Since $A_{j,m-j}$ is homogeneous of degree $m-j$, each
$c_{m,j}(\bar\xi)$ is homogeneous of degree $m-j$. This proves the moment
condition.

Moreover,
\[
\sum_{m=j}^\infty
\frac{(-i)^m}{m!}c_{m,j}(\bar\xi)
=
\sum_{m=j}^\infty A_{j,m-j}(\bar\xi)
=
A_j(\bar\xi).
\]
Thus the function constructed in condition \emph{(iii)} of the corollary agrees
with the same function $H$ appearing in \Cref{t:rangeRayPlane}. The
Paley-Wiener growth estimate is therefore exactly the Fourier-space form of the
support condition for the inverse Fourier transform of $H$.

Conversely, assume that conditions \emph{(i)}--\emph{(iii)} of the corollary
hold. Define
\[
H(\sigma,\bar\xi)
=
\sum_{j=0}^\infty Q_j(\bar\xi)\sigma^j,
\qquad
Q_j(\bar\xi)
=
\sum_{m=j}^\infty
\frac{(-i)^m}{m!}c_{m,j}(\bar\xi).
\]
By condition \emph{(iii)}, this formal series extends to an entire function on
$\mathbb C\times\mathbb C^{n-1}$ satisfying the Paley-Wiener estimate \eqref{PW_bound}.

We claim that
\[
W(\bar a,\bar\xi)=H(\bar a\cdot\bar\xi,\bar\xi).
\]
For fixed $\bar a$, both sides are entire functions of $\bar\xi$. The Taylor
series of $W$ at $\bar\xi=0$ is
\[
W(\bar a,\bar\xi)
=
\sum_{m=0}^\infty
\frac{(-i)^m}{m!}
J_m(\bar a,\bar\xi).
\]
Using the moment condition, this becomes
\[
W(\bar a,\bar\xi)
=
\sum_{m=0}^\infty
\frac{(-i)^m}{m!}
\sum_{j=0}^m
c_{m,j}(\bar\xi)(\bar a\cdot\bar\xi)^j.
\]
On the other hand, the Taylor expansion of
$H(\bar a\cdot\bar\xi,\bar\xi)$ at $\bar\xi=0$ is, by construction,
\[
H(\bar a\cdot\bar\xi,\bar\xi)
=
\sum_{j=0}^\infty
Q_j(\bar\xi)(\bar a\cdot\bar\xi)^j
=
\sum_{j=0}^\infty
\sum_{m=j}^\infty
\frac{(-i)^m}{m!}
c_{m,j}(\bar\xi)(\bar a\cdot\bar\xi)^j.
\]
The homogeneous term of degree $m$ in $\bar\xi$ is
\[
\frac{(-i)^m}{m!}
\sum_{j=0}^m
c_{m,j}(\bar\xi)(\bar a\cdot\bar\xi)^j
=
\frac{(-i)^m}{m!}
J_m(\bar a,\bar\xi).
\]
Thus $W(\bar a,\bar\xi)$ and
$H(\bar a\cdot\bar\xi,\bar\xi)$ have the same Taylor expansion at
$\bar\xi=0$. Since both are entire in $\bar\xi$, they are identical. Hence the
factorization condition in \Cref{t:rangeRayPlane} holds.

It remains to identify the support condition. By the Paley-Wiener theorem, the stated estimate for $H$ in \eqref{PW_bound} is equivalent to the statement that its inverse Fourier transform
$h(t,\bar p)$ is a smooth function supported in the compact convex set
\[
K
=
\left\{
(t,\bar p)\in\mathbb R\times\mathbb R^{n-1}
:
t\in[-M_0,-m_0],
\ |\bar p|\le -Rt
\right\}.
\]
Indeed, the support function of $K$ is
\begin{align*}
H_K(\operatorname{Im}\sigma,\operatorname{Im}\bar\xi)
&=
\sup_{(t,\bar p)\in K}
\big(
t\operatorname{Im}\sigma
+
\bar p\cdot\operatorname{Im}\bar\xi
\big)
\\
&=
\sup_{t\in[-M_0,-m_0]}
\left[
t\operatorname{Im}\sigma
+
\sup_{|\bar p|\le -Rt}
\bar p\cdot\operatorname{Im}\bar\xi
\right]
\\
&=
\sup_{t\in[-M_0,-m_0]}
t\big(
\operatorname{Im}\sigma
-
R|\operatorname{Im}\bar\xi|
\big)
\\
&=
\max_{t\in\{-m_0,-M_0\}}
t\big(
\operatorname{Im}\sigma
-
R|\operatorname{Im}\bar\xi|
\big).
\end{align*}
Therefore condition \emph{(iii)} of the corollary is exactly the
Paley-Wiener formulation of the support and smoothness condition in
\Cref{t:rangeRayPlane}.

Thus conditions \emph{(ii)} and \emph{(iii)} of the corollary together are
equivalent to the factorization and support conditions in
\Cref{t:rangeRayPlane}. Since condition \emph{(i)} is the same in both
statements, the corollary follows from \Cref{t:rangeRayPlane}.
\end{proof}
 
The equivalence between \Cref{t:rangeRayPlane} and Corollary~\ref{cor:moment-ray} should
be understood as an equivalence of the full sets of conditions, not as a
pairwise equivalence of the individual conditions.
The reason is that the factorization condition in \Cref{t:rangeRayPlane},
\[
W(\bar a,\bar\xi)=H(\bar a\cdot\bar\xi,\bar\xi),
\]
and the support condition on the inverse Fourier transform of \(H\) are coupled
through the same function \(H\). Similarly, in Corollary~\ref{cor:moment-ray}, the moment
condition produces the coefficients used to define
\[
H(\sigma,\bar\xi)=\sum_{j=0}^\infty Q_j(\bar\xi)\sigma^j,
\]
and the Paley--Wiener condition is imposed on this same function \(H\).

Thus the moment condition alone is not equivalent to the PDE/factorization
condition alone. The PDE condition only describes how \(W\) depends on \(\bar a\),
namely through the scalar quantity \(\bar a\cdot\bar\xi\). By itself, it does
not guarantee that the Taylor coefficients of \(W\) have the polynomial moment
structure required in Corollary~\ref{cor:moment-ray}. Conversely, the moment condition
alone gives a coefficient-level constraint, but without the Paley--Wiener
condition it does not guarantee that the resulting formal series defines an
entire function \(H\) whose inverse Fourier transform has the required support
and smoothness.

Therefore, conditions \emph{(ii)} and \emph{(iii)} in the corollary should be
viewed together: they jointly reformulate conditions \emph{(ii)} and
\emph{(iii)} in \Cref{t:rangeRayPlane}. The individual conditions are not
freely interchangeable unless one also verifies that the same function \(H\) is
being used in both formulations.

\subsection{\texorpdfstring{\boldmath Range of the $k$-weighted Compton transform for infinite planar detectors}{}}
We now characterize the range of the $k$-weighted Compton transform in the case of planar detector geometry by combining the range conditions for $S$ and $R^k$, stated in Theorems~\ref{t:rangeSST} and~\ref{t:rangeRayPlane}, respectively.

\begin{theorem}[Range characterization of the $k$-weighted Compton transform]
\label{t:rangeCompton}
Let $n \ge 3$ and $k \in \mathbb{N}_0$. Let $$g \in \mathcal{C}^\infty(\mathbb{R}^{n-1} \times \mathbb{S}^{n-1} \times (-1,1)).$$
Then, $g$ belongs to the range of the $k$-weighted Compton transform, that is, $g = C^k f$, for some $f\in \mathcal{C}_c^\infty(\mathbb{R}^n_+)$, if and only if $g$ satisfies the following conditions:

\begin{enumerate}
\item \textbf{Evenness:} For every $\bar{a} \in \mathbb{R}^{n-1}$, 
\[
g(\bar{a}, -\beta, -s) = g(\bar{a}, \beta, s), \qquad (\beta, s) \in \mathbb{S}^{n-1} \times (-1,1).
\]
\item \textbf{PDE Condition:} For every $\bar{a} \in \mathbb{R}^{n-1}$, 
\[
\Big[(1-s^2)\partial_s^2 + (n-3)s\partial_s + \frac{n-2}{1-s^2} - \Delta_{\mathbb{S}^{n-1}}\Big] g(\bar{a}, \beta, s) = 0, \qquad (\beta, s) \in \mathbb{S}^{n-1} \times (-1,1).
\]
\item \textbf{Limit Condition:} The limit
\[
u(\bar{a}, v) := \lim_{s \to 1} \frac{g(\bar{a}, v, s)}{|\mathbb{S}^{n-2}|(1-s^2)^{(n-2)/2}}
\]
exists for every $(\bar{a}, v) \in \mathbb{R}^{n-1} \times \mathbb{S}^{n-1}$ and defines a smooth function $u \in \mathcal{C}^\infty(\mathbb{R}^{n-1} \times \mathbb{S}^{n-1})$. Furthermore, $u(\bar{a}, v) = 0$ for all directions $v = (\bar{v}, v_n)$ where $v_n \le 0$.
\item \textbf{Projective Data Conditions:} For directions $v_n > 0$, define the projective variable $\bar{p} = \bar{v}/v_n \in \mathbb{R}^{n-1}$ and the scaled projective data $w(\bar{a}, \bar{p}) := v_n^{k+1}u(\bar{a}, v)$. Let $W(\bar{a}, \bar{\xi})$ be the Fourier transform of $w$ with respect to $\bar{p}$. Then $w$  and $W$ satisfy:
\begin{enumerate}
\item[(a)] \textbf{Compact Support:} For every fixed $\bar{a} \in \mathbb{R}^{n-1}$, the function $w(\bar{a}, \cdot)$ has compact support.
\item[(b)] \textbf{PDE Condition:} There exists a function $H(\sigma, \bar{\xi})$ such that 
$$W(\bar{a}, \bar{\xi}) = H(\bar{a}\cdot\bar{\xi}, \bar{\xi}).$$
\item[(c)] \textbf{Support and Smoothness:} The inverse Fourier transform of $H$ with respect to both variables, defined as
\[
h(t, \bar{p}) := \frac{1}{(2\pi)^n} \int_{\mathbb{R}^{n-1}} \int_{\mathbb{R}} e^{i(t\sigma + \bar{p}\cdot\bar{\xi})} H(\sigma, \bar{\xi}) \, d\sigma \, d\bar{\xi},
\]
is a smooth function ($h \in \mathcal{C}^\infty(\mathbb{R} \times \mathbb{R}^{n-1})$). Furthermore, there exist constants $0 < m_0 < M_0 < \infty$ and $R > 0$ such that the support of $h$ satisfies
\[
\supp h \subset \left\{ (t, \bar{p}) \in \mathbb{R} \times \mathbb{R}^{n-1} : t \in [-M_0, -m_0] \text{ and } |\bar{p}| \le -R t \right\}.
\]
\end{enumerate}
\end{enumerate}
\end{theorem}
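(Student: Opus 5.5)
The proof will follow the same pattern as the proof of \Cref{t:rangeCRT}. We combine the factorization \eqref{Cone=SSTofDBT}, $C^k f(a,\beta,s)=S(R^k_a f)(\beta,s)$ with $a=(\bar a,0)$, with \Cref{t:rangeSST} applied for each fixed $\bar a$, and with \Cref{t:rangeRayPlane} applied to the limit function $u$. The only new ingredient compared with the bounded convex case is condition (iii): the divergent beam data must vanish on the closed lower hemisphere, so that the function on $\mathbb{S}^{n-1}$ produced by \Cref{t:rangeSST} is exactly the zero extension of $R^k f$ from $\mathbb{S}^{n-1}_+$.

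\textbf{Necessity.} Let $f\in\mathcal{C}_c^\infty(\mathbb{R}^n_+)$ and set $u(\bar a,v):=R^k_a f(v)$ for all $v\in\mathbb{S}^{n-1}$. For $v_n\le 0$ the ray $a+rv$, $r\ge0$, stays in $\{x_n\le 0\}$, which is disjoint from $\supp f$. Hence $u(\bar a,v)=0$ there. Moreover, since $\supp f$ is at positive distance from the plane $\{x_n=0\}$, $u$ vanishes for $v_n$ near $0$, so $u$ is smooth on $\mathbb{R}^{n-1}\times\mathbb{S}^{n-1}$. For each $\bar a$, $g(\bar a,\cdot,\cdot)=S(u(\bar a,\cdot))$, and \Cref{t:rangeSST} gives (i), (ii) and the existence of the limit in (iii), with the limit equal to $u(\bar a,\beta)$. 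Finally, (iv)(a)--(c) are exactly the necessity part of \Cref{t:rangeRayPlane} applied to $u|_{\mathbb{R}^{n-1}\times\mathbb{S}^{n-1}_+}$.

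\textbf{Sufficiency.} Define $u$ by the limit in (iii). Then $u$ is smooth and vanishes for $v_n\le0$. Applying the sufficiency part of \Cref{t:rangeRayPlane} to the restriction of $u$ to the upper hemisphere, using (iv)(a)--(c), produces $f\in\mathcal{C}_c^\infty(\mathbb{R}^n_+)$, constructed explicitly by \eqref{f_def_from_h}, with $u(\bar a,v)=R^k_af(v)$ for $v_n>0$. By the necessity argument both sides vanish for $v_n\le0$, so the identity holds on all of $\mathbb{S}^{n-1}$. Next, fix $\bar a$. The evenness condition (i) turns the limit at $s\to1$ into the limit at $s\to-1$:
\[
\lim_{s\to-1}\frac{g(\bar a,\beta,s)}{|\mathbb S^{n-2}|(1-s^2)^{(n-2)/2}}=u(\bar a,-\beta).
\]
Thus condition (3) of \Cref{t:rangeSST} holds with the smooth function $h=u(\bar a,\cdot)$. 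Together with (i) and (ii), \Cref{t:rangeSST} yields $g(\bar a,\beta,s)=S(u(\bar a,\cdot))(\beta,s)=S(R^k_af)(\beta,s)=C^kf(\bar a,\beta,s)$.

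\textbf{Main obstacle.} The delicate point is the identification step in \Cref{t:rangeSST}. As stated, that theorem only asserts that $g=Sh$ for \emph{some} $h$, with the endpoint limits defining $h(\pm\beta)$. We need that the preimage is exactly $u(\bar a,\cdot)$. I would argue this as follows. $S$ is injective on even-plus-odd data because the limit formula recovers $h$ pointwise: the normalized limit as $s\to1$ of $Sh$ equals $h(\beta)$. So any $h$ with $g=Sh$ must coincide with $u(\bar a,\cdot)$. I would also check that smoothness of $u$ across the equator $v_n=0$ is consistent with the vanishing requirement in (iii). This requires that $w(\bar a,\cdot)$ be compactly supported, which is (iv)(a), so that $u$ vanishes in a neighbourhood of the equator from above. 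Hence the zero extension is smooth, and no extra compatibility condition is needed.
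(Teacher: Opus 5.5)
Your proposal is correct and follows the paper's proof. It uses the factorization $C^kf=S(R^k_af)$, applies \Cref{t:rangeSST} for each fixed $\bar a$ with evenness converting the $s\to1$ limit into the $s\to-1$ limit, and applies \Cref{t:rangeRayPlane} on the upper hemisphere, with $u=0$ for $v_n\le0$ coming from the ray geometry. The paper leaves implicit the points you spell out, and all are sound: smoothness of $u$ across the equator (this needs compactness of $\supp f$, not just its distance from the plane, and holds locally uniformly in $\bar a$), extending $u=R^k_af$ to $v_n\le0$, and identifying the preimage in \Cref{t:rangeSST} through the endpoint limit.
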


\begin{proof}
We first prove necessity. Let $f\in \mathcal{C}_c^\infty(\mathbb{R}^n)$ with $\operatorname{supp} f \subset \mathbb{R}^n_+$. We will show that the function
\[
g(\bar{a}, \beta, s) := C^k f(\bar{a}, \beta, s) = S(R_{a}^k f)(\beta, s), \quad a = (\bar{a}, 0),
\]
satisfies the conditions of the theorem.

Let
\[
u(\bar{a}, v) := R^k f(a, v).
\]
Then, 
\[
g(\bar{a}, \beta, s) = S(u(\bar{a}, \cdot))(\beta, s).
\]
 
By \Cref{t:rangeSST}, the range characterization of the spherical section transform, conditions \emph{(i)} and \emph{(ii)} hold immediately, and the limit in condition \emph{(iii)} recovers $u(\bar{a}, v)$:
\[
\lim_{s \to 1} \frac{g(\bar{a}, v, s)}{|\mathbb{S}^{n-2}|(1-s^2)^{(n-2)/2}} = u(\bar{a}, v).
\]

Since $\operatorname{supp} f \subset \mathbb{R}^n_+$, any ray originating at $a = (\bar{a}, 0)$ pointing in a direction $v = (\bar{v}, v_n)$ with $v_n \le 0$ will strictly miss the support of $f$. Thus, the integral $u(\bar{a}, v) = R^k f(a, v) = 0$ for all $v_n \le 0$. This fully satisfies condition \emph{(iii)}.

Lastly, because $u(\bar{a}, v) = R^k f(a, v)$ is generated by a function supported in $\mathbb{R}^n_+$, the range characterization of the $k$-weighted divergent beam transform guarantees that the scaled projective data $w$ and its Fourier transform $W$ satisfy the Compact Support, PDE, and Paley-Wiener constraints. This establishes condition \emph{(iv)}.

We now prove sufficiency. Suppose $g$ satisfies conditions \emph{(i)} through \emph{(iv)}. We will construct a function $f$ with $\operatorname{supp} f \subset \mathbb{R}^n_+$ such that $g = C^k f$.

First, we define $u(\bar{a}, v)$ using the limit defined in condition \emph{(iii)}:
\[
u(\bar{a}, v) := \lim_{s \to 1} \frac{g(\bar{a}, v, s)}{|\mathbb{S}^{n-2}|(1-s^2)^{(n-2)/2}}.
\]
By condition \emph{(iii)}, $u(\bar{a}, v) = 0$ whenever $v_n \le 0$, meaning the non-zero data is isolated to the upper hemisphere $v \in \mathbb{S}^{n-1}_+$. Furthermore, condition \emph{(iv)} guarantees that the projective data $w(\bar{a}, \bar{p}) := v_n^{k+1}u(\bar{a}, v)$ satisfies the required Compact Support, PDE, and Paley-Wiener bounds.

Consequently, by \Cref{t:rangeRayPlane}, the range characterization of the $k$-weighted divergent beam transform, there exists a smooth function $f$ with $\operatorname{supp} f \subset \mathbb{R}^n_+$ such that
\[
u(\bar{a}, v) = R^k f(a, v),
\]
for all $a = (\bar{a}, 0)$.

Next we fix an arbitrary $\bar{a} \in \mathbb{R}^{n-1}$. By our definition of $u$ from the limit, and applying the symmetry $g(\bar{a}, -\beta, -s) = g(\bar{a}, \beta, s)$ from condition \emph{(i)}, the opposite limit at $s \to -1$ evaluates to $u(\bar{a}, -v)$. Combined with the PDE constraint in condition \emph{(ii)}, all prerequisites for the spherical section transform range characterization are met for this fixed source $\bar{a}$.

Thus, we conclude that $g(\bar{a}, \beta, s)$ is the spherical section transform of $u(\bar{a}, \cdot)$:
\[
g(\bar{a}, \beta, s) = S(u(\bar{a}, \cdot))(\beta, s).
\]
Substituting our relation $u(\bar{a}, v) = R^k f(a, v)$ into this equation gives:
\[
g(\bar{a}, \beta, s) = S(R^k f)(\beta, s) = C^k f(\bar{a}, \beta, s).
\]
Since $\bar{a}\in\mathbb{R}^{n-1}$ was arbitrary, this shows that $g$ lies in the
range of the $k$-weighted Compton transform, completing the proof.
\end{proof}

\section{Conclusions}\label{s:conclusions}
We have obtained complete data consistency conditions for weighted cone integral transforms
in two types of vertex geometries that arise in integral geometry and Compton
camera imaging. The results show that the range problem is governed not only by
the conical integration surface and the weight, but also by the position of the
vertex set relative to the support of the unknown function.

A common feature in both cases is the factorization of the transform into a
$k$-weighted divergent beam transform followed by the spherical section
transform. This factorization separates the angular part of the problem from the
divergent beam component. The range conditions for the spherical section
transform are common to both geometries, while the range conditions for the
divergent beam transform depend on the underlying vertex geometry.

When the vertex set is bounded, convex, and contains the support of the unknown
function, the range of the $k$-weighted divergent beam transform is characterized by a higher-order transport boundary-value problem. This leads to the range characterization of the
$k$-weighted conical Radon transform. In contrast, for the Compton transform we
studied the planar detector geometry, where the vertices lie on $\mathbb{R}^{n-1}$ and are disjoint from the support of the radiation density. In this case, the range of the $k$-weighted divergent beam transform is described by cone-beam type consistency conditions generalizing those of Clackdoyle and Desbat \cite{ClackdoyleDesbat2013}, yielding the range characterization of the $k$-weighted Compton transform.

The present work treats the planar detector geometry for the Compton transform.
An interesting direction for future work is to extend the analysis to other
vertex geometries, such as vertices restricted to curved detector surfaces or
lower-dimensional detector bin setups.

\section*{Acknowledgements}
The work of the first author was supported in part by NSF DMS grant 2206279. 


\bibliographystyle{siam}
\bibliography{references}

\end{document}